\newtheorem{thm}{Theorem}
\newtheorem{lemma}[thm]{Lemma}
\theoremstyle{definition}
\newtheorem{exam}[thm]{Example}
\newcommand{\lf}[2]{\left(\frac{#1}{#2}\right)}
\newcommand{\tr}{{\mathrm{Tr}}}
\newcommand{\gf}{{\mathbb F}}
\begin{document}

\title{Weight Distributions of a Class of Cyclic Codes with Arbitrary Number of Zeros II}

\author{
Jing Yang,\thanks{J. Yang is at the Department of Mathematical Sciences, Tsinghua University,
Beijing, 100084, China (email: jingyang@math.tsinghua.edu.cn).}
Lingli Xia, \thanks{L. Xia is at the Basic Courses Department of Beijing Union University, Beijing, 100101, China \& the Department of Mathematical Sciences, Tsinghua University,
Beijing, 100084, China (email: lingli@buu.edu.cn).}
Maosheng Xiong \thanks{M. Xiong is at the Department of Mathematics, The Hong Kong University of Science and Technology, Clear Water Bay, Kowloon, Hong Kong (email: mamsxiong@ust.hk).}
}
\maketitle


\renewcommand{\thefootnote}{}




\begin{abstract}
Cyclic codes are an important class of linear codes, whose weight distribution have been extensively studied. So far, most of previous results obtained were for cyclic codes with no more than three zeros. Recently, \cite{Y-X-D12} constructed a class of cyclic codes with arbitrary number of zeros, and computed the weight distributions for several cases. In this paper, we determine the weight distribution for a new family of such codes. This is achieved by certain new methods, such as the theory of Jacobi sums over finite fields and subtle treatment of some complicated combinatorial identities.

\end{abstract}

\begin{keywords}
Cyclic codes, weight distribution, Gaussian periods, Jacobi sums.
\end{keywords}

\section{Introduction}\label{sec-into}

A linear code $\mathcal{C}$ over the finite field $\mathbb{F}_q$ of length $n$ is a subspace of $\mathbb{F}_{q}^n$. It is called \textit{cyclic} if it also satisfies that any $(c_0,c_1,\cdots ,c_{n-1})\in \mathcal{C}$ implies $(c_{n-1},c_0,\cdots,c_{n-2})\in \mathcal{C}$. By the one-to-one correspondence
$$\begin{array}{cccl}
\sigma:& \mathcal{C}&\rightarrow &R:=\mathbb{F}_{q}[x]/(x^n-1)\\
 &(c_0,c_1,\cdots ,c_{n-1})&\mapsto&c_0+c_1x+\cdots +c_{n-1}x^{n-1},
\end{array}$$
each cyclic code $\mathcal{C}$ is equivalent to an ideal of $R$. Since $R$ is a principal ideal ring, there exists a unique monic polynomial $g(x)$ with least degree such that $\sigma(\mathcal{C})=g(x)R$ and $g(x)\mid (x^n-1)$. The $g(x)$ is called the \textit{generator polynomial} of $\mathcal{C}$ and $h(x):=(x^n-1)/g(x)$ is called the \textit{parity-check polynomial} of $\mathcal{C}$. The cyclic code $\mathcal{C}$ is called irreducible (resp. reducible) if $h(x)$ is irreducible (resp. reducible) over $\mathbb{F}_q$. For $\mathcal{C}$ reducible, we say that $\mathcal{C}$ \textit{has $t$ ($\ge 2)$ zeros} if $h(x)$ has $t$ irreducible factors over $\mathbb{F}_{q}$. (In the literature some authors call $\mathcal{C}$ as ``the dual of a cyclic code with $t$ zeros'' instead.)

Denote by $A_i$ the number of codewords of $\mathcal{C}$ with Hamming weight $i$. The {\em weight enumerator} of $\mathcal{C}$ with length $n$ is a polynomial in $\mathbb{Z}[Y]$ defined by
$$A_0+A_1Y+A_2Y^2+ \cdots + A_nY^n.$$
The sequence $(A_0,A_1,\cdots ,A_n)$ is called the \textit{weight distribution} of $\mathcal{C}$. The study of weight distribution of a linear code is important in both theory and application, since the weight distribution of a code gives the minimum distance and thus the error correcting capability of the code, and the weight distribution of a code allows the computation of the probability of error detection and correction with respect to some algorithms \cite{Klov}. Moreover, the weight distribution is always related to interesting and challenging problems in number theory (\cite{cal,Schroof}).

For irreducible cyclic codes, an identity due to McEliece \cite{McE74} shows that the weights of the codes can be expressed via Gauss sums. Because Gauss sums in general are extremely difficult to evaluate, the weight distribution of irreducible cyclic codes is still quite difficult to obtain, however, extensive studies have been carried out with much success by various number theoretic techniques (\cite{AL06,BM72,BM73,fit,McE74,McE72,Rao10,van,wol}). In particular nice characterizations were given in \cite{D-Y12,Vega1,Vega2} for irreducible cyclic codes with exactly one nonzero weight; necessary and sufficient conditions were provided and conjectures were also raised by Schmidt and White \cite{schmidt} for irreducible cyclic codes with at most two nonzero weights. Interested readers may consult the survey paper \cite{D-Y12} for more updated information on the weight distribution of irreducible cyclic codes.

For reducible cyclic codes, it has been known that the determination of weight distribution involves the evaluation of exponential sums. This may be even more difficult in general. For many special families of reducible cyclic codes where neat expressions are available, various delicate techniques from number theory and algebraic combinatorics have been developed and utilized, and for some of such families, the weight distribution can been obtained (see for example \cite{Ding2,FL08,Feng12,F-M12,holl,luo2,luo3,luo4,Ding1,M09,Mois09,Vega12,Tang12,Xiong1,Xiong2,Xiong3,zeng}). However, to our best knowledge, most of these literature works focus on reducible cyclic codes with two or three zeros. The exponential sums which have been explicitly evaluated seem to share a common feature that they attain only a few distinct values. 

For reducible cyclic codes with more than three zeros, not much is known. In a beautiful work \cite{gegeng}, the authors obtained the weight distribution of a class of cyclic codes with arbitrary number of zeros. Their work built upon an unexpected connection between the corresponding exponential sums and the spectra of Hermitian forms graphs which were known in the literature. In another recent work \cite{Y-X-D12} a general family of reducible cyclic codes with arbitrary number of zeros were constructed and under certain conditions the weight distribution was also obtained. The purpose of this paper is to explore the construction of \cite{Y-X-D12} much further and to determine the weight distribution for another new family of reducible cyclic codes with arbitrary number of zeros. Compared with \cite{Y-X-D12}, we achieve our goal by more advanced theory of Jacobi sums and by more subtle treatment of some complicated combinatorial identities.

The rest of the paper is organized as follows. The codes we consider will be introduced in Section \ref{sec-II}, so are the main results (Theorems 1, 2 and 3). Section \ref{sec-pre} introduces some mathematical tools such as cyclotomy, Gaussian periods and general Jacobi sums that will be needed later. In Sections \ref{sec-main} and \ref{sec-mainII} we prove our main theorems. To streamline the proofs of Theorems \ref{thm-e1}, \ref{thm-e2} and \ref{thm-e3} we have left out the proof of a complicated combinatorial identity to Section \ref{sec-app}. Section \ref{sec-conclusion} concludes this paper.

\section{Weight Distribution of Code $\mathcal{C}_{(a_1,\cdots,a_t)}$} \label{sec-II}

We first fix some notation. Let $p$ be a prime, $q=p^s$, $r=q^m$ for some integers $s,m\geqslant 1$. Let $\mathbb{F}_r$ be a finite field of order $r$ and $\gamma$ be a generator of the multiplicative group $\mathbb{F}_{r}^*:=\mathbb{F}_r \setminus \{0\}$. For any $t \ge 2$, the family of reducible cyclic codes $\mathcal{C}_{(a_1,\cdots,a_t)}$ with $t$ zeros were introduced in \cite{Y-X-D12} as follows.

\noindent \emph{For any $e \geqslant t \geqslant 2$, assume that}
\begin{itemize}
\item[ i)]  \emph{$a \not \equiv 0 \pmod{r-1} \mbox{ and } e|(r-1)$;}

\item[ ii)] \emph{$a_i \equiv a+\frac{r-1}{e}\Delta_i \pmod{r-1},\, 1\leqslant i \leqslant t$, where $\Delta_i \not \equiv \Delta_j \pmod{e}$ for any $ i \ne j$ and \\ $\gcd(\Delta_2-\Delta_1,\ldots,\Delta_t-\Delta_1,e)=1$;}

\item[ iii)] \emph{$
\deg h_{a_1}(x)=\cdots=\deg h_{a_t}(x)=m, \mbox{ and } h_{a_i}(x) \neq h_{a_j}(x)$ for any $1\leqslant i\neq j\leqslant t$, where $h_{a_i}(x)$ is the minimal polynomial of $\gamma^{-a_i}$ over $\mathbb{F}_q$};

\item[iv)] \emph{$N=\gcd \left(\frac{r-1}{q-1},a e\right)$};

\item[v)] \emph{$\delta=\gcd(r-1,a_1,a_2,\cdots ,a_{t}),\ n=\frac{r-1}{\delta}$}.
\end{itemize}
\emph{The cyclic code $\mathcal{C}_{(a_1,\cdots,a_t)}$ with $t$ zeros $\gamma^{-a_1},\cdots,\gamma^{-a_t}$ is given by}
\begin{equation}\label{def}
\begin{array}{l}
\mathcal{C}_{(a_1,\cdots,a_t)}=\left\{ c(x_1,x_2,\cdots,x_{t})=\left(\tr_{r/q}\left(\sum_{j=1}^t x_j \gamma^{a_ji}  \right)\right)_{i=0}^{n-1}~:~\forall \, x_1,\cdots,x_{t}\in\mathbb{F}_{r} \right\},
\end{array}\end{equation}
\emph{where $\tr_{r/q}$ denotes the trace map from $\mathbb{F}_{r}$ to $\mathbb{F}_{q}$.}

It shall be noted that Condition iii) can be easily verified, for example, it holds if $\frac{r-1}{q^\ell - 1}\nmid N$ for any proper factor $\ell$ of $m$ (i.e. $\ell \mid m$ and $\ell<m$, see \cite[Lemma 6]{Y-X-D12}). In particular this is always the case if $N=2$, which is our interest in the paper.

Delsarte's Theorem \cite{Delsarte} states that $\mathcal{C}_{(a_1,\cdots,a_t)}$ is an $[n,tm]$ cyclic code over $\mathbb{F}_{q}$ with parity-check
polynomial $h(x)=h_{a_1}(x)\cdots h_{a_{t}}(x)$. This class of codes $\mathcal{C}_{(a_1,\cdots,a_t)}$ contain many interesting cyclic codes as special cases which have been extensively studied in the literature (\cite{Ding1,Ding2,F-M12,Tang12,Xiong1,Xiong2,Xiong3}), all of which focus on the case $t=2$. 

For any $t \ge 3$, in \cite{Y-X-D12} we obtain the weight distribution of $\mathcal{C}_{(a_1,\cdots,a_t)}$ under either of the following conditions:

\begin{itemize}
\item for any $t,e \ge 2$ when $N=1$; or

\item for any $t=e \ge 2$ with $N=1,2,3$; or with $N=(p^j+1)/k$ for some positive integers $j,k$; or with $N$ being a prime number such that $N \equiv 3\pmod{4},\lf{p}{N}=1$ (here $\lf{*}{*}$ denotes the Legendre symbol).     
\end{itemize}
In this paper we obtain the weight distribution of $\mathcal{C}_{(a_1,\cdots,a_t)}$ for any $t \ge 2$ such that $t=e-1$ and $N=2$. Note that under these conditions, it is necessary that $q$ is odd, $m$ is even and $2|ae$. Our main results are stated as follows.

\begin{thm}\label{thm-e1}
For $N=2$ and any $t=e-1 \geqslant 2$, we further assume that
\begin{eqnarray} \label{1:assumption}
e|(q^{m/2}-1) \, \mbox{ \emph{and} } \, 2|a.
\end{eqnarray}
Then $\mathcal{C}_{(a_1,\ldots,a_t)}$ is an $[n,tm]$ cyclic code over $\gf_q$ with the minimal Hamming distance $d=\frac{2(q-1)(r-\sqrt{r})}{(t+1)q\delta}$. It has (at most) $\frac{1}{2}(t^2+5t-2)$ nonzero distinct weights. \begin{itemize}
\item[(i).] If $q \equiv 1 \pmod{4}$, then the weight distribution is listed in Table \ref{Table1}.

\item[(ii).] If $q \equiv 3 \pmod{4}$, then the weight distribution is listed in Table \ref{Table2}.

\end{itemize}
\end{thm}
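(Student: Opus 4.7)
The plan is to compute the Hamming weight of every codeword as an exponential sum over $\gf_r$, decompose that sum along the cyclotomic classes of index $e$ induced by the exponents $a_j$, evaluate each piece via index-$2$ (semi-primitive) Gauss sums, and finally count multiplicities by a combinatorial analysis of the linear constraint produced by $t=e-1$.

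First, I would fix the canonical additive character $\chi=\psi\circ\tr_{r/q}$ of $\gf_r$, write $\omega=\gamma^{(r-1)/e}$ (a primitive $e$-th root of unity), and use orthogonality of additive characters together with $\gcd(a_1,\ldots,a_t,r-1)=\delta$ to get, for $\vec{x}=(x_1,\ldots,x_t)\in\gf_r^{\,t}$,
\begin{equation*}
w(\vec{x})=\frac{(q-1)n}{q}-\frac{1}{q\delta}\sum_{z\in\gf_r^{*}}\chi\!\left(\sum_{j=1}^{t}x_j z^{a_j}\right).
\end{equation*}
Using $a_j\equiv a+\tfrac{r-1}{e}\Delta_j\pmod{r-1}$, I write $z^{a_j}=z^{a}\omega^{\Delta_j k}$ whenever $z$ lies in the cyclotomic coset $C_k=\gamma^{k}\langle\gamma^{e}\rangle$. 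This factors the sum as
\begin{equation*}
\sum_{z\in\gf_r^{*}}\chi\!\left(z^{a}\phi_k(\vec{x})\right)\ \text{over}\ C_k,\qquad \phi_k(\vec{x}):=\sum_{j=1}^{t}x_j\omega^{\Delta_j k},
\end{equation*}
so $w(\vec{x})$ is determined by the $e$-tuple $(\phi_0,\ldots,\phi_{e-1})$.

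Next I would analyse the map $\vec{x}\mapsto(\phi_0,\ldots,\phi_{e-1})$. Since $t=e-1$, the $e\times(e-1)$ matrix $(\omega^{\Delta_j k})$ has a one-dimensional left kernel (a Vandermonde-type computation together with $\gcd(\Delta_2-\Delta_1,\ldots,\Delta_t-\Delta_1,e)=1$ guarantees that the kernel is spanned by a single explicit vector with all coordinates nonzero). Hence the image of the map is the hyperplane $\mathcal{H}\subset\gf_r^{\,e}$ cut out by one explicit linear relation $\sum_k c_k\phi_k=0$, and the map is a bijection onto $\mathcal{H}$. This is the structural fact that drives the multiplicity count.

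The third step is to evaluate, for each nonzero $u\in\gf_r$, the inner sum $I_k(u)=\sum_{z\in C_k}\chi(z^{a}u)$. Under $N=2$, $e|(q^{m/2}-1)$, and $2|a$, the subgroup generated by the $a$-th powers together with $\gf_q^{*}$ has index $2$ in $\gf_r^{*}$; writing $I_k(u)$ as a linear combination of multiplicative characters and invoking the semi-primitive Gauss sum formula yields exactly two distinct nonzero values $\eta^{(+)},\eta^{(-)}$ of the form $\tfrac{1}{q\delta}\big((q-1)\cdot\tfrac{-1\pm\sqrt{\epsilon r}}{2}\big)$, with $\epsilon=(-1)^{(q-1)/2}$; this is precisely the input that separates cases (i) $q\equiv 1\pmod 4$ (where $\sqrt{r}\in\gf_r$) and (ii) $q\equiv 3\pmod 4$ (where $\sqrt{-r}$ appears and the signs flip). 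Which of the two values $I_k(u)$ attains is governed by a quadratic character evaluated at $u\gamma^{-ak}$, so for a fixed $\vec{x}$ the weight $w(\vec{x})$ depends only on the pair of integers $(s, s^{+})$ where $s=\#\{k:\phi_k(\vec{x})=0\}$ and $s^{+}=\#\{k:I_k(\phi_k(\vec{x}))=\eta^{(+)}\}$.

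The final and hardest step is the frequency count: for each admissible pair $(s,s^{+})$ one must count the number of $\vec{x}\in\gf_r^{\,t}$ producing that pair, i.e., the number of points on $\mathcal{H}$ whose coordinates satisfy the prescribed vanishing/quadratic-character pattern. Inclusion-exclusion over vanishing patterns plus expansion of the quadratic character converts this into a sum of Jacobi sums over $\gf_r$, evaluated with one linear constraint. The minimum distance claim $d=\tfrac{2(q-1)(r-\sqrt r)}{(t+1)q\delta}$ and the bound $\tfrac{1}{2}(t^2+5t-2)$ on the number of nonzero weights both fall out of this classification (the extremal weight being achieved at $s=0$ with the all-$\eta^{(+)}$ pattern, after the $(t+1)$ comes from averaging over the constraint). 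I expect this Jacobi-sum bookkeeping, rather than any individual Gauss-sum evaluation, to be the principal obstacle: it amounts to proving a family of cyclotomy identities indexed by $(s,s^{+})$ — the kind of combinatorial identity the authors explicitly defer to Section \ref{sec-app}. Carrying this program through case-by-case on $q\bmod 4$ then produces Tables \ref{Table1} and \ref{Table2}.
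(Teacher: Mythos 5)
Your proposal follows essentially the same route as the paper: the cyclotomic decomposition of the weight into sums over the classes $C_k$ is the paper's expression of $w_H$ via modified Gaussian periods, your hyperplane $\mathcal{H}$ with all-nonzero defining coefficients is the paper's Vandermonde change of variables $y_0+\sum_h\lambda_h y_h=0$, the two-valued evaluation of $I_k(u)$ is Lemma \ref{lem-degree2}, and the Jacobi-sum bookkeeping over vanishing/quadratic-character patterns is precisely the computation of the quantities $\Omega_{i_1\cdots i_u i_{u+1}}$ deferred to Section \ref{sec-app}. The only point you leave vague --- exactly where $2\mid a$ and $e\mid(q^{m/2}-1)$ enter --- is the paper's normalization step: these hypotheses force every twist coefficient $\lambda_h g^h$ to be a square, so the frequency count depends only on the pattern sizes $(s,s^{+})$ and yields the closed forms of Tables \ref{Table1} and \ref{Table2}.
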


\begin{table}[ht]
\caption{The weight distribution of $\mathcal{C}$ when $N=2$ and $t=e-1\geqslant 2$: Case (i).}\label{Table1}
\begin{center}{
\begin{tabular}{|c|c|}
  \hline
  Weight & Frequency $\quad (\forall \, 1 \leqslant k \leqslant t, 0 \leqslant u\leqslant k+1)$ \\
  \hline
  \hline
$0$ & once \\
  \hline
  $\frac{q-1}{(t+1) q \delta}\cdot \bigg\{(k+1)r-(k+1-2u)\sqrt{r}\bigg\}$ & $\frac{(r-1)}{r 2^{k+2}}\cdot \binom{t+1}{k+1}\binom{k+1}{u}\cdot \bigg\{2(r-1)^k-$\\
  & $(-1)^k\left\{
  (1+\sqrt{r})^u(1-\sqrt{r})^{k+1-u}+(1-\sqrt{r})^u(1+\sqrt{r})^{k+1-u}\right\}\bigg\}$\\
  \hline
\end{tabular}}
\end{center}
\end{table}

\begin{table}[ht]
\caption{The weight distribution of $\mathcal{C}$ when $N=2$ and $t=e-1\geqslant 2$: Case (ii).}\label{Table2}
\begin{center}{
\begin{tabular}{|c|c|}
  \hline
  Weight & Frequency $\quad (\forall \, 1 \leqslant k \leqslant t, 0 \leqslant u\leqslant k+1)$ \\
  \hline
  \hline
$0$ & once \\
  \hline
  $\frac{q-1}{(t+1) q \delta}\cdot \bigg\{(k+1)r-(-1)^{m/2}(k+1-2u)\sqrt{r}\bigg\}$ & $\frac{(r-1)}{r 2^{k+2}}\cdot \binom{t+1}{k+1}\binom{k+1}{u}\cdot \bigg\{2(r-1)^k-$\\
  & $(-1)^k\left\{
  (1+\sqrt{r})^u(1-\sqrt{r})^{k+1-u}+(1-\sqrt{r})^u(1+\sqrt{r})^{k+1-u}\right\}\bigg\}$\\
  \hline
\end{tabular}}
\end{center}
\end{table}

We remark that if $N=2$ and $t=e-1$ is even, then the condition (\ref{1:assumption}) will always be satisfied, so this settles the case completely. In particular the special case $N=2,e=3,t=2$ was already studied in \cite{Tang12}. When $N=2$ and $t=e-1$ is odd, there are two cases: if the condition (\ref{1:assumption}) is satisfied, this is again settled by Theorem \ref{thm-e1}; on the other hand, if the condition (\ref{1:assumption}) is not satisfied, in principle the weight distribution can still be obtained. However, the formulas become quite complicated. To illustrate that, we first present the weight distribution for the simple case $t=3$ in Theorem \ref{thm-e2}, and then give a computational formula for the general case in Theorem \ref{thm-e3}.

\begin{thm}\label{thm-e2}
For $N=2$ and $t=e-1 =3$.
Then $\mathcal{C}_{(a_1,a_2,a_3)}$ is an $[n,3m]$ cyclic code over $\gf_q$ with the minimal Hamming distance $d=\frac{(q-1)(r-\sqrt{r})}{2q\delta}$. It has (at most) $12$ nonzero weights. \begin{itemize}
\item[(i).] If $2|a$, then its weight distribution is listed in Table \ref{Table3} (or Table \ref{Table1} with $t=3$).

\item[(ii).] If $2 \nmid a$, then its weight distribution is listed in Table \ref{Table4}.

\end{itemize}
\end{thm}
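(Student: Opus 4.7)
The strategy is to specialize the general scheme used for Theorem \ref{thm-e1} to the case $e=4$, $t=3$, and to track carefully what changes when $2\nmid a$. The first step is to express the Hamming weight of each codeword $c(x_1,x_2,x_3)$ as an exponential sum: for $(x_1,x_2,x_3)\in\mathbb{F}_r^3$ not all zero,
\begin{equation*}
w(x_1,x_2,x_3) = n - \frac{1}{q}\sum_{y\in\mathbb{F}_q^*} S(yx_1,yx_2,yx_3),
\end{equation*}
where $S(x_1,x_2,x_3)=\sum_{i=0}^{n-1}\psi\bigl(\sum_{j=1}^3 x_j\gamma^{a_j i}\bigr)$ and $\psi$ is the canonical additive character of $\mathbb{F}_r$. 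After partitioning $\mathbb{F}_r^*$ into the four cyclotomic classes of order $e=4$, $S$ rewrites in terms of Gaussian periods of order $4$, which in turn are governed by the Gauss sums $G(\chi^{\ell})$ of a multiplicative character $\chi$ of order $4$ on $\mathbb{F}_r^*$ and the corresponding Jacobi sums $J(\chi^i,\chi^j)$.

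For Part (i) with $2\mid a$, the twist $\chi^a$ has order at most $2$, so only the quadratic Gauss sum ($\pm\sqrt{r}$) enters; moreover the standing hypothesis $N=2$ combined with $2\mid a$ forces $v_2\bigl(\tfrac{r-1}{q-1}\bigr)=1$, which in turn implies $q\equiv 1\pmod 4$ and $4\mid q^{m/2}-1$, so the full hypothesis set of Theorem \ref{thm-e1} is met. Hence Part (i) is exactly the $t=3$ specialization of Theorem \ref{thm-e1}, with weight distribution given by Table \ref{Table1} at $t=3$ (i.e., Table \ref{Table3}). For Part (ii) with $2\nmid a$, the twist $\chi^a$ remains of order $4$ and $G(\chi^a)$ is genuinely complex, contributing an extra phase of the form $(-1)^{m/2}$. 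I would then (a) evaluate $S(x_1,x_2,x_3)$ through the order-$4$ Jacobi sums $J(\chi^i,\chi^j)$, using the tools of Section \ref{sec-pre} and the identity $G(\chi)G(\overline{\chi})=\chi(-1)r$; (b) group the triples $(x_1,x_2,x_3)$ by which of the four cyclotomic classes $x_1,x_2,x_3$ lie in, producing the list of possible weights; and (c) count these triples to obtain the frequencies recorded in Table \ref{Table4}.

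The main obstacle is the bookkeeping in Part (ii): the order-$4$ Gauss-sum phase interacts non-trivially with the sum over $y\in\mathbb{F}_q^*$ and with the constraint $N=2$, and several distinct cyclotomic configurations can yield the same weight. Verifying that the frequencies sum to $r^3-1$ and reconstructing Table \ref{Table4} exactly will rest on the combinatorial identity deferred to Section \ref{sec-app}.
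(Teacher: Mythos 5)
Your Part (i) is essentially the paper's argument: one checks that $N=2$ together with $e=4$ forces $q\equiv 1\pmod 4$, $m\equiv 2\pmod 4$ and $4\mid (q^{m/2}-1)$, so hypothesis (\ref{1:assumption}) holds and Table \ref{Table3} is just Table \ref{Table1} at $t=3$. (A minor slip: these congruences follow from $N=2$ and $e=4$ alone, not from $2\mid a$.)

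Part (ii), however, takes a route that does not work as described. You propose to expand $S(x_1,x_2,x_3)$ over the four cyclotomic classes of order $e=4$ and to evaluate it through quartic Gauss sums $G(\chi^{\ell})$ and quartic Jacobi sums. But quartic Gauss sums over $\gf_r$ are not $\pm\sqrt r$ up to a sign; they are governed by the quadratic partition of $r$ and are not explicitly computable in general, so the claimed ``extra phase of the form $(-1)^{m/2}$'' for $G(\chi^a)$ with $\chi^a$ of order $4$ is unjustified and in general false. The reason the weights in Table \ref{Table4} involve only $\sqrt r$ is that the hypothesis $N=\gcd((r-1)/(q-1),ae)=2$ reduces everything to \emph{quadratic} periods: by (\ref{equ-Weight of C}) the weight is controlled by $\sum_{h}\bar\eta^{(2,r)}_{g^h y_h}$ with $y_h=\sum_\tau x_\tau\beta_\tau^h$. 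The key device of the paper, which your proposal omits, is the Vandermonde change of variables $(y_0,\dots,y_t)^T=B(x_1,\dots,x_t)^T$, which turns the problem into counting solutions of the single linear relation $y_0+\sum_h\lambda_h y_h=0$ with each nonzero variable confined to a quadratic cyclotomic class; these counts are the quantities $\Omega_{i_1\cdots i_u i_{u+1}}$, evaluated via quadratic reduced Jacobi sums in Section \ref{sec-app}. In this picture the only role of the parity of $a$ is that $g=\gamma^a$ is a nonsquare when $2\nmid a$, so among the coefficients $\lambda_h g^h$ exactly $l_0=1$ are squares and $l_1=2$ are nonsquares, and Table \ref{Table4} then drops out of the general formula of Theorem \ref{thm-e3}. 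Grouping the original $(x_1,x_2,x_3)$ by their order-$4$ classes, as you suggest in steps (b)--(c), does not determine the weight, which depends on the quadratic residuosity of the transformed variables $y_h$; in particular your scheme has no mechanism to produce, say, the weight $\frac{(q-1)r}{2\delta q}$ with frequency $4(r-1)$, which arises from mixed quadratic classes of the $y_h$.
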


\begin{table}[ht]
\caption{The weight distribution of $\mathcal{C}_{(a_1,a_2,a_3)}$ when $t=e-1=3,N=2$ and $2\mid a$.}\label{Table3}
\begin{center}{
\begin{tabular}{|c|c|}
  \hline
  Weight & Frequency \\\hline\hline
  0& once\\\hline
 $\frac{q-1}{2\delta q}(r+\sqrt{r})$&$3(r-1)$~times\\\hline
 $\frac{q-1}{2\delta q}(r-\sqrt{r})$&$3(r-1)$~times\\\hline
 $\frac{3(q-1)}{4\delta q}(r+\sqrt{r})$ & $(r-1)(r-5)/2$~times \\\hline
 $\frac{3(q-1)}{4\delta q}(r-\sqrt{r})$ & $(r-1)(r-5)/2$~times \\\hline
 $\frac{(q-1)}{4\delta q}(3r+\sqrt{r})$ & $3(r-1)^2/2$~times \\\hline
 $\frac{(q-1)}{4\delta q}(3r-\sqrt{r})$ & $3(r-1)^2/2$~times \\\hline
 $\frac{(q-1)}{\delta q}(r+\sqrt{r})$ & $(r-1)(r^2-2r+9)/16$~times \\\hline
 $\frac{(q-1)}{\delta q}(r-\sqrt{r})$ & $(r-1)(r^2-2r+9)/16$~times \\\hline
 $\frac{(q-1)}{2\delta q}(2r+\sqrt{r})$ & $(r-1)(r^2-4r+3)/4$~times \\\hline
 $\frac{(q-1)}{2\delta q}(2r-\sqrt{r})$ & $(r-1)(r^2-4r+3)/4$~times \\\hline
 $\frac{(q-1)}{\delta q}r$ & $3(r-1)^3/8$~times \\\hline
\end{tabular}}
\end{center}
\end{table}

\begin{table}[ht]
\caption{The weight distribution of $\mathcal{C}_{(a_1,a_2,a_3)}$ when $t=e-1=3,N=2$ and $2\nmid a$.}\label{Table4}
\begin{center}{
\begin{tabular}{|c|c|}
  \hline
  Weight & Frequency \\\hline\hline
  0& once\\\hline
 $\frac{q-1}{2\delta q}(r+\sqrt{r})$&$(r-1)$~times\\\hline
 $\frac{q-1}{2\delta q}(r-\sqrt{r})$&$(r-1)$~times\\\hline
 $\frac{q-1}{2\delta q}r$&$4(r-1)$~times\\\hline
 $\frac{3(q-1)}{4\delta q}(r+\sqrt{r})$ & $(r-1)^2/2$~times \\\hline
 $\frac{3(q-1)}{4\delta q}(r-\sqrt{r})$ & $(r-1)^2/2$~times \\\hline
 $\frac{(q-1)}{4\delta q}(3r+\sqrt{r})$ & $(r-1)(3r-7)/2$~times \\\hline
 $\frac{(q-1)}{4\delta q}(3r-\sqrt{r})$ & $(r-1)(3r-7)/2$~times \\\hline
 $\frac{(q-1)}{\delta q}(r+\sqrt{r})$ & $(r-1)^3/16$~times \\\hline
 $\frac{(q-1)}{\delta q}(r-\sqrt{r})$ & $(r-1)^3/16$~times \\\hline
 $\frac{(q-1)}{2\delta q}(2r+\sqrt{r})$ & $(r-1)(r^2-4r+3)/4$~times \\\hline
 $\frac{(q-1)}{2\delta q}(2r-\sqrt{r})$ & $(r-1)(r^2-4r+3)/4$~times \\\hline
 $\frac{(q-1)}{\delta q}r$ & $(r-1)(3r^2-6r+11)/8$~times \\\hline
\end{tabular}}
\end{center}
\end{table}

We now consider the general case for $N=2$ and $t=e-1$. Denote $g:=\gamma^a$ and $\beta:=\gamma^{(r-1)/e}$. And let $A$ be the Vandermonde matrix of size $(t+1)\times (t+1)$, given by
\begin{equation}\label{matrix}A:=\left(
         \begin{array}{llll}
           1&1&\cdots&1 \\
           1&\beta&\cdots&\beta^{e-1} \\
           1&\beta^2&\cdots&\beta^{2(e-1)} \\
           \vdots &\vdots &&\vdots\\
           1&\beta^{e-1}&\cdots&\beta^{(e-1)^2}
         \end{array}
       \right),\end{equation}
Take $B$ be the $(t+1) \times t$-matrix whose columns consist of the $\{\triangle_1+1,\ldots,\triangle_t+1\} \pmod{e}$ columns of $A$, where $\triangle_i$ are the basic parameters of $\mathcal{C}_{(a_1,\cdots,a_t)}$. Let
\begin{equation}\label{equ-xtoy}
    (y_0,\cdots ,y_{t})^T=B(x_1,\cdots ,x_t)^T.
\end{equation}
Since rank$B=t$ (see also \cite[Lemma 18]{Y-X-D12}), this gives a one-to-one correspondence between $(y_1,\ldots,y_t)$ and $(x_1,\ldots,x_t)$, and there exist some $0 \ne \lambda_i \in \mathbb{F}_{q}$ for $1\leqslant i\leqslant t$ such that
\begin{equation*}\label{equ-y0}
  y_0+\sum\limits_{i=1}^t \lambda_iy_i=0
\end{equation*}
We note that $\{\lambda_i\}_{i=1}^t$ depend only on the parameters $\{\triangle_i \pmod{e}\}_{i=1}^t$ and $\beta$. We further define
\begin{equation}\label{equ-l1l2}
  \begin{array}{l}
  l_0=\#\{i\mid\lambda_ig^i \mbox{ is a square in } \mathbb{F}_r,1\leqslant i\leqslant t\};\\
  l_1=\#\{i\mid\lambda_ig^i \mbox{ is a nonsquare in } \mathbb{F}_r,1\leqslant i\leqslant t\}.
  \end{array}
\end{equation}
Next, we extend the definition of binomial coefficient to all integers such that
$$\binom{n}{i}=0,\ \mbox{for $i<0$ and $i>n$}.$$

With such preparations, we give our main result for the general case as follows.

\begin{thm}\label{thm-e3}
In the case of $N=2$ and $t=e-1\geqslant 2$, the $\mathcal{C}_{(a_1,\ldots,a_t)}$ is an $[n,tm]$ cyclic code over $\gf_q$ with the minimal Hamming distance $d=\frac{2(q-1)(r-\sqrt{r})}{(t+1)q\delta}$, and the Hamming weight of its codewords takes the value 0 once and the value 
\begin{eqnarray*}  \frac{(q-1)}{(t+1)q \delta}\left[k(r-1)-2u\eta_0^{(2,r)}-2(k-u)\eta_1^{(2,r)}\right], \end{eqnarray*}
for any $2 \le k \le t+1$ and $0 \leqslant  u \le k$, with the frequency
\begin{eqnarray*} \sum_{k_0=0}^k\sum_{u_0=0}^u\binom{l_0+1}{k_0}\binom{l_1}{k-k_0}
\binom{k_0}{u_0}\binom{k-k_0}{u-u_0}\Omega_{{\scriptsize \underbrace{0 \cdots 0}_{2u_0+k-k_0-u,} \underbrace{1\cdots 1}_{k_0+u-2u_0}}}. \end{eqnarray*}
where $\eta_0^{(2,r)},\eta_1^{(2,r)}$ are given by Lemma \ref{lem-degree2}, $\Omega_{{\scriptsize \underbrace{0 \cdots 0}_{u} \underbrace{1\cdots 1}_{v}}}$ is determined by Lemma \ref{thm-NNN3} and $l_0,l_1$ are defined by (\ref{equ-l1l2}).
\end{thm}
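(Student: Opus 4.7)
The plan is to follow the character-sum framework used for Theorems~\ref{thm-e1} and~\ref{thm-e2}: express the Hamming weight of a codeword $c(x_1,\ldots,x_t)$ as an exponential sum over $\gf_r^*$, translate it to the $y$-coordinates via (\ref{equ-xtoy}), reduce the resulting sum to Gaussian periods of order $2$ using $N=\gcd\bigl((r-1)/(q-1),ae\bigr)=2$, and then carry out the combinatorial bookkeeping for the frequencies.

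First I would apply orthogonality of the canonical additive character $\psi_q$ of $\gf_q$ to write
\begin{equation*}
\mathrm{wt}(c) \;=\; n \;-\; \frac{1}{q}\sum_{i=0}^{n-1}\sum_{z\in \gf_q}\psi_q\!\left(z\,\tr_{r/q}\Bigl(\sum_{j=1}^t x_j\gamma^{a_j i}\Bigr)\right),
\end{equation*}
and, folding the sum over $i$ into a sum over coset representatives of the group of $\delta$-th powers in $\gf_r^*$, reduce $\mathrm{wt}(c)$ to $\frac{q-1}{q\delta}(r-1)$ minus a single exponential sum $\Sigma(x)$ over $\gf_r^*$.

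Second, since $t=e-1$ the Vandermonde matrix $A$ in (\ref{matrix}) is invertible, so $B$ has rank $t$ and (\ref{equ-xtoy}) is a bijection between $(x_1,\ldots,x_t)\in\gf_r^t$ and tuples $(y_0,\ldots,y_t)\in\gf_r^{t+1}$ satisfying the hyperplane equation $y_0+\sum_{i=1}^t\lambda_i y_i=0$. For $\alpha=\gamma^i\in\gf_r^*$ a short computation using $a_j=a+\frac{r-1}{e}\Delta_j$ yields
$$\sum_{j=1}^t x_j\alpha^{a_j} \;=\; g^i\sum_{j=0}^t y_j\beta^{ij},$$
with $g=\gamma^a$ and $\beta=\gamma^{(r-1)/e}$. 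Decomposing $\gf_r^*$ into cosets of $\langle g^e\rangle$, the sum $\Sigma(x)$ splits as $\sum_{l=0}^{t}S_l$, where each $S_l$ is (up to the $(t+1)^{-1}$ factor from the relative orbit size) a Gauss-type sum of $\psi_r$ over a coset of a subgroup of index $2$. By Lemma~\ref{lem-degree2}, $S_l$ equals $\eta_0^{(2,r)}$ if $\lambda_l g^l y_l$ is a nonzero square, $\eta_1^{(2,r)}$ if it is a nonsquare, and contributes a trivial coset sum (absorbed into the main term) if $y_l=0$. Collecting contributions, if exactly $k$ of the $y_l$ are nonzero with $u$ of them yielding $\eta_1^{(2,r)}$, one obtains
$$\mathrm{wt}(c) \;=\; \frac{q-1}{(t+1)q\delta}\bigl[k(r-1)-2u\eta_0^{(2,r)}-2(k-u)\eta_1^{(2,r)}\bigr],$$
which is the weight in the statement; note $k\geqslant 2$, since a single nonzero $y_l$ would violate the hyperplane equation.

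Third, to count the frequency of each weight I would partition $\{0,1,\ldots,t\}$ into the $l_0+1$ indices for which $\lambda_j g^j$ is a square in $\gf_r$ (setting $\lambda_0 g^0=1$, so $j=0$ is a square-type index) and the $l_1$ indices for which $\lambda_j g^j$ is a nonsquare. Conditioning on $k_0$ of the $k$ nonzero entries lying among the square-type indices and on $u_0$ of those $k_0$ making $y_j$ itself a nonsquare (while $u-u_0$ of the nonsquare-type nonzero entries make $y_j$ a square) accounts exactly for the four ways of producing each of $\eta_0^{(2,r)},\eta_1^{(2,r)}$, and yields the combinatorial factor $\binom{l_0+1}{k_0}\binom{l_1}{k-k_0}\binom{k_0}{u_0}\binom{k-k_0}{u-u_0}$. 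The induced pattern of the $y_j$'s consists of $k_0+u-2u_0$ square and $2u_0+k-k_0-u$ nonsquare entries among the nonzero ones, so by Lemma~\ref{thm-NNN3} the number of $(y_0,\ldots,y_t)$ realising this square/nonsquare profile \emph{and} lying on the hyperplane is $\Omega_{\underbrace{0\cdots 0}_{2u_0+k-k_0-u}\underbrace{1\cdots 1}_{k_0+u-2u_0}}$. Summing over $k_0$ and $u_0$ yields the stated frequency.

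The main obstacle I anticipate is showing that the number of hyperplane-solutions with a prescribed square/nonsquare pattern depends only on the counts of squares and nonsquares, not on which positions carry which label --- a permutation symmetry that makes the clean product-of-binomials formula legitimate. This is precisely the content of Lemma~\ref{thm-NNN3}, and its correct invocation is the delicate step. A secondary bookkeeping check is that the $1/(t+1)$ prefactor from the orbit decomposition of $\gf_r^*$ fits consistently with the bijection (\ref{equ-xtoy}), which one can cross-verify at the end by confirming that the stated frequencies, together with the single zero codeword, sum to $r^t$.
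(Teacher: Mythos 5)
Your proposal is correct and follows essentially the same route as the paper: reduce the weight to the sum of modified Gaussian periods $\widetilde{T}$ via (\ref{equ-Weight of C}) and the change of variables (\ref{equ-xtoy}), split the $t+1$ positions into the $l_0+1$ square-type and $l_1$ nonsquare-type indices, and condition on $(k_0,u_0)$ to express each frequency through the quantities $\Omega$ of Lemma \ref{thm-NNN3}. The only blemish is a harmless labeling slip in your second step (you say $u$ of the nonzero $y_l$ yield $\eta_1^{(2,r)}$ but then write the weight with the term $-2u\eta_0^{(2,r)}$); since both the set of values and the frequency formula are invariant under $u\mapsto k-u$, this does not affect the result.
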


We remark that Theorem \ref{thm-e3} is a general computational formula for the weight distribution of $\mathcal{C}_{(a_1,\cdots,a_t)}$, and
the results of Theorem \ref{thm-e1} and Theorem \ref{thm-e2} can be viewed as its corollaries. However, the frequency formula in Theorem \ref{thm-e3} is complicated since it depends on the choice of $\triangle_1,\cdots,\triangle_t$, and there seems no easy way to write them down in a simple closed form as Theorem \ref{thm-e1}.

In the end of this section, we give serval numerical examples to illustrate the our main theorems.

\begin{exam}
Let $(q, m, e, t)=(5,2,4,3)$. Then $\frac{r-1}{3}=\frac{5^2-1}{4}=6$. Let $\gamma$ be the generator of $\gf_{25}^*$ with characteristic polynomial $\gamma^2 + 4\gamma + 2=0$. Let $(\triangle_1,\triangle_2,\triangle_3)=(1,2,3)$.
\begin{itemize}
\item[(1).] For $a=2$ we have $(a_1,a_2,a_3)=(8,14,20)$, $(\delta,n)=(2,12)$ and
$$
h_{a_1}(x)=x^2 + x + 1, \, h_{a_2}(x)= x^2 + 3x + 4, \, h_{a_3}(x)= x^2 + 4x + 1.
$$
The parity-check polynomial of $\mathcal{C}$ is then
$
h(x)=x^6 + 3x^5 + 3 x^3+3x+4.
$
The code $\mathcal{C}$ is a $[12,6,4]$-cyclic code over $\gf_5$ with weight enumerator given by
\[1+72Y^4+312 Y^{6}+864 Y^{7}+1740 Y^{8}+3408 Y^{9}+5184 Y^{10}+3168 z^{11}+876 Y^{12}.\]
This also follows from Table III. There are 8 distinct non-zero weights because some of the weights in Table III turn out the same. More precisely,
\begin{eqnarray} \label{1:eqns} \begin{array}{ccc}
\frac{(q-1)}{2 \delta q}(r+\sqrt{r}) =\frac{3(q-1)}{4 \delta q}(r-\sqrt{r}), \\
\frac{(q-1)}{4 \delta q}(3r+\sqrt{r})=\frac{(q-1)}{\delta q}(r-\sqrt{r}),\\
\frac{3(q-1)}{4 \delta q}(r+\sqrt{r})=\frac{(q-1)}{2 \delta q}(2r-\sqrt{r}).\end{array}
\end{eqnarray}

\item[(2).] For $a=1$ we have $(a_1,a_2,a_3)=(7,13,19)$, $(\delta,n)=(1,24)$ and
$$
h_{a_1}(x)=x^2 + x + 2, \, h_{a_2}(x)= x^2 + 2x + 1, \, h_{a_3}(x)= x^2 + 4x + 2.
$$
The parity-check polynomial of $\mathcal{C}$ is then
$
h(x)=x^6 + 2x^5 + 4x^4 + x^3+2x^2+3x+4.
$
The code $\mathcal{C}$ is a $[24,6,8]$-cyclic code over $\gf_5$ with weight enumerator given by
\[1+24Y^8+96 Y^{10}+312 Y^{12}+816 Y^{14}+1680 Y^{16}+3456 Y^{18}+5208 z^{20}+3168 Y^{22}+ 864 Y^{24}.\]
This also follows from Table IV. There are 9 distinct non-zero weights because some of the weights in Table IV turn out the same. More precisely, the equations (\ref{1:eqns}) still hold true.

\end{itemize}
\end{exam}

\section{Cyclotomy, Gaussian periods and Jacobi sums}\label{sec-pre}

An {\em additive character} of $\mathbb{F}_{r}$ is a nonzero function $\phi$
from $\mathbb{F}_{r}$ to the set of complex numbers such that
$\phi(x+y)=\phi(x) \phi(y)$ for any pair $(x, y) \in \mathbb{F}_{r}^2$. Let $\tr_{r/p}$ denote the trace function from $\mathbb{F}_{r}$ to $\mathbb{F}_{p}$ and $\zeta_p=e^{2\pi \sqrt{-1}/p}$ be the primitive $p$-th complex root of unit. The additive character $\psi$ given by
\begin{eqnarray}\label{dfn-add}
 \psi(c)=\zeta_p^{\tr_{r/p}(c)} \ \ \mbox{ for any }
c\in\mathbb{F}_{r}
\end{eqnarray}
is called the {\em canonical additive character} of $\mathbb{F}_{r}$. For any $x\in \mathbb{F}_{r}$, one can easily check the orthogonal property
\begin{equation}\label{add-orth}
    \frac{1}{r}\sum\limits_{x\in \mathbb{F}_{r}}\psi(ax)=\left\{
      \begin{array}{ll}
        1, & \hbox{if $a=0$;} \\
        0, & \hbox{if $a\in \mathbb{F}^*_{r}$.}
      \end{array}
    \right.
\end{equation}

Let $r-1=l L$ for two positive integers $l, L\geqslant 1$, and let
$\gamma$ be a fixed primitive element of $\mathbb{F}_{r}$.
Define $C_{i}^{(L,r)}=\gamma^i \langle \gamma^{L} \rangle$ for $i=0,1,...,L-1$, where
$\langle \gamma^{L} \rangle$ denotes the
subgroup of $\mathbb{F}_{r}^*$ generated by $\gamma^{L}$. The $C_{i}^{(L,r)}$ are
called the {\em cyclotomic classes} of order $L$ in $\mathbb{F}_{r}$. The {\em Gaussian periods} of order $L$ are defined by
$$
\eta_i^{(L,r)} =\sum_{x \in C_i^{(L,r)}} \psi(x), \quad i=0,1,..., L-1.
$$

The values of Gaussian periods are difficult to compute in general.
However, they are known in a few cases. We will need the following whose proofs can be found in \cite{B-E-W} and \cite{Myer}.

\vspace{.2cm}
\begin{lemma}\label{lem-degree2}
When $L=2$, the Gaussian periods are given by
\begin{eqnarray*}
\eta_0^{(2,r)}=
\left\{
\begin{array}{ll}
\frac{-1+(-1)^{s\cdot m-1}r^{1/2}}{2}, & \mbox{if $p\equiv 1 \pmod{4}$} \\
\frac{-1+(-1)^{s\cdot m-1}(\sqrt{-1})^{s\cdot m} r^{1/2}}{2}, & \mbox{if $p\equiv 3 \pmod{4}$}
\end{array}
\right.
\end{eqnarray*}
and
$\eta_1^{(2,r)} = -1 - \eta_0^{(2,r)}.$
\end{lemma}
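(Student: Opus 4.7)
The plan is to reduce the Gaussian periods to the quadratic Gauss sum and then evaluate the latter via the classical theorem of Gauss together with the Hasse--Davenport lifting relation. Let $\chi$ denote the quadratic multiplicative character of $\mathbb{F}_r$, extended by $\chi(0)=0$, and let $\psi$ be the canonical additive character defined in \eqref{dfn-add}. Write $G(\chi,\psi)=\sum_{x\in\mathbb{F}_r^*}\chi(x)\psi(x)$ for the associated Gauss sum.

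First I would use the indicator identity $\mathbbm{1}_{C_0^{(2,r)}}(x)=\tfrac{1}{2}(1+\chi(x))$ valid for $x\in\mathbb{F}_r^*$. Summing $\psi(x)$ over $C_0^{(2,r)}$ gives
\begin{equation*}
\eta_0^{(2,r)}=\tfrac{1}{2}\sum_{x\in\mathbb{F}_r^*}\psi(x)+\tfrac{1}{2}\sum_{x\in\mathbb{F}_r^*}\chi(x)\psi(x)=\tfrac{-1+G(\chi,\psi)}{2},
\end{equation*}
using the standard orthogonality $\sum_{x\in\mathbb{F}_r^*}\psi(x)=-1$. The companion identity $\eta_1^{(2,r)}=-1-\eta_0^{(2,r)}$ then follows immediately from $\eta_0^{(2,r)}+\eta_1^{(2,r)}=\sum_{x\in\mathbb{F}_r^*}\psi(x)=-1$, so the whole lemma reduces to pinning down $G(\chi,\psi)$.

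Next I would handle the base case $r=p$. Let $\chi_0$ be the quadratic character of $\mathbb{F}_p$ and $\psi_0(x)=\zeta_p^x$. Gauss's celebrated sign theorem gives
\begin{equation*}
G(\chi_0,\psi_0)=\begin{cases} \sqrt{p} & \text{if } p\equiv 1\pmod 4,\\ \sqrt{-1}\,\sqrt{p} & \text{if } p\equiv 3\pmod 4.\end{cases}
\end{equation*}
I would quote this as a known fact (the routine step is to show $G(\chi_0,\psi_0)^2=\chi_0(-1)p=\pm p$, so the only content is the sign, which is the classically hard part of the statement).

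Finally, to pass from $\mathbb{F}_p$ to $\mathbb{F}_r=\mathbb{F}_{p^{sm}}$, I would invoke the Hasse--Davenport lifting relation, which says that if $\chi'=\chi_0\circ N_{\mathbb{F}_r/\mathbb{F}_p}$ and $\psi'=\psi_0\circ\mathrm{Tr}_{\mathbb{F}_r/\mathbb{F}_p}$ then $G(\chi',\psi')=(-1)^{sm-1}\bigl(G(\chi_0,\psi_0)\bigr)^{sm}$. Since $\chi=\chi'$ and $\psi=\psi'$ here, substituting the two values above yields
\begin{equation*}
G(\chi,\psi)=\begin{cases}(-1)^{sm-1}\,r^{1/2} & p\equiv 1\pmod 4,\\ (-1)^{sm-1}(\sqrt{-1})^{sm}\,r^{1/2} & p\equiv 3\pmod 4.\end{cases}
\end{equation*}
Plugging this into the expression $\eta_0^{(2,r)}=(-1+G(\chi,\psi))/2$ gives the two stated formulas, and then $\eta_1^{(2,r)}=-1-\eta_0^{(2,r)}$ completes the proof.

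The only nontrivial ingredients are Gauss's determination of the sign of the quadratic Gauss sum over $\mathbb{F}_p$ and the Hasse--Davenport relation for lifting Gauss sums to extensions; both are textbook results (precisely those in \cite{B-E-W,Myer} cited by the authors), so the lemma is essentially a direct assembly of these two classical facts via the indicator-function identity.
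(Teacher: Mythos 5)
Your proof is correct: the reduction $\eta_0^{(2,r)}=\tfrac{1}{2}\left(-1+G(\chi,\psi)\right)$ via the indicator identity, Gauss's sign theorem for the quadratic Gauss sum over $\mathbb{F}_p$, and the Hasse--Davenport lifting relation (with $\chi$ indeed the norm-lift of the quadratic character of $\mathbb{F}_p$, since the norm is surjective) assemble exactly into the two stated formulas, and $\eta_1^{(2,r)}=-1-\eta_0^{(2,r)}$ follows from orthogonality. The paper gives no proof of this lemma at all --- it simply cites \cite{B-E-W} and \cite{Myer} --- and your argument is precisely the standard derivation contained in those references, so you have merely (correctly) filled in the details the authors chose to leave to the literature.
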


A {\em multiplicative character} of $\gf_r$ is a nonzero function
$\chi$ from $\gf_r^*$ to the set of complex numbers such that
$\chi(xy)=\chi(x)\chi(y)$ for all the pairs $(x, y) \in \gf_r^*
\times \gf_r^*$. For $j=1,2,\ldots,r-1$, one can easily check that the functions $\chi^{(j)}$ with
\begin{eqnarray}\label{dfn-mul}
\chi^{(j)}(\gamma^k)=\zeta_{r-1}^{jk} \ \ \mbox{for } k=0,1,\ldots,r-2 \nonumber
\end{eqnarray}
give all the multiplicative character of order dividing $r-1$, here $\zeta_{r-1}$ denotes the primitive complex $(r-1)$-th root of unit. When $j=r-1$, $\varepsilon(c):=\chi^{(r-1)}(c)=1 \mbox{ for all }
c\in\gf_r^*,$ which is called the {\em trivial multiplicative
character} of $\gf_r$. One can check the following orthogonal property of multiplicative characters
\begin{equation}\label{equ-orthogonal1}
    \frac{1}{r-1}\sum\limits_{x\in \mathbb{F}_{r}^*}\chi(x)=
\left\{
  \begin{array}{ll}
    1, & \hbox{if\ }\chi=\varepsilon; \\
    0, & \hbox{otherwise.}
  \end{array}
\right.
\end{equation}
Furthermore, we may extend the definition of any multiplicative character $\chi$ to $\mathbb{F}_{r}$ as follows,
\begin{equation}\label{equ-chi(0)}
    \chi(0)=\left\{
              \begin{array}{ll}
                0, & \hbox{if }\chi\neq \varepsilon; \\
                1, & \hbox{if }\chi=\varepsilon.
              \end{array}
            \right.\nonumber
\end{equation}


Let $k\geqslant 2$ and $\chi_1,\cdots ,\chi_k$ be multiplicative characters of $\mathbb{F}_{r}$. The \textit{Jacobi sum} related with $\chi_1,\cdots ,\chi_k$ over $\mathbb{F}_{r}$ is defined by
$$J(\chi_1,\cdots ,\chi_k):=\sum\limits_{z_1,\cdots z_k\in \mathbb{F}_{r}\atop z_1+\cdots +z_k=1}\chi_1(z_1) \chi_2(z_2)\cdots \chi_k(z_k).$$

The following (\cite{B-E-W}) are elementary properties of Jacobi sums.
\begin{lemma}\label{lem_J}
\par (a).~$J(\underbrace{\varepsilon,\cdots ,\varepsilon}_k)=q^{k-1}$.
\par (b).~$J(\chi_1,\cdots,\chi_k)=0$ if some but not all of $\chi_1,\cdots ,\chi_k$ are trivial.
\par (c).~When $r$ is odd, let $\rho$ be the quadratic multiplicative character of $\mathbb{F}_{r}$, then
 $$J(\underbrace{\rho,\cdots,\rho}_k)=
\left\{
  \begin{array}{ll}
    -\rho(-1)^{\frac{k}{2}}r^{\frac{k-2}{2}}, & \hbox{if $k$ is even;} \\
    \rho(-1)^{\frac{k-1}{2}}r^{\frac{k-1}{2}}, & \hbox{if $k$ is odd.}
  \end{array}
\right.$$
 \end{lemma}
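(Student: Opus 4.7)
My plan is to follow the standard weight-distribution approach for reducible cyclic codes of this type. First I would use the orthogonality relation \eqref{add-orth} for the canonical additive character $\psi$ of $\gf_r$ to rewrite the Hamming weight $w(c(x_1,\ldots,x_t))$ as $\frac{q-1}{q\delta}\bigl[(r-1)-T(x_1,\ldots,x_t)\bigr]$, where $T$ is a character sum running over $z\in\gf_r^*$. After performing the invertible coordinate change $(y_0,\ldots,y_t)^T=B(x_1,\ldots,x_t)^T$ from \eqref{equ-xtoy}, the inner linear form $\sum_j x_j\gamma^{a_j i}$ becomes $\sum_{k=0}^{t}y_k g^{ik}$ with $g=\gamma^a$, and the relation $y_0+\sum_{i=1}^{t}\lambda_i y_i=0$ lets me eliminate $y_0$ and treat $(y_1,\ldots,y_t)$ as free variables ranging over $\gf_r^t$.

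Next I would stratify the codewords by the support size $k$ of the full tuple $(y_0,y_1,\ldots,y_t)$. On any fixed support, each nonzero $y_i$ (for $i\ge 1$) pairs with the scalar $\lambda_i g^i$, whose class in $\gf_r^*/(\gf_r^*)^2$ is fixed; the $y_0$-slot picks up an extra $\pm 1$ sign through the constraint $y_0=-\sum\lambda_i y_i$ (which is the source of the ``$+1$'' in $l_0+1$). The counts $l_0,l_1$ from \eqref{equ-l1l2} then give the factor $\binom{l_0+1}{k_0}\binom{l_1}{k-k_0}$ for how many of the $k$ active positions lie in the square versus nonsquare class. Since $N=2$, the resulting inner character sums reduce to order-$2$ Gaussian periods and, by Lemma \ref{lem-degree2}, evaluate to either $\eta_0^{(2,r)}$ or $\eta_1^{(2,r)}$; grouping contributions by the number $u$ of $\eta_0$-factors produces precisely the announced weight value
\[
\frac{(q-1)}{(t+1)q\delta}\bigl[k(r-1)-2u\,\eta_0^{(2,r)}-2(k-u)\,\eta_1^{(2,r)}\bigr].
\]

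For the frequency I would separate three independent choices: (i) which positions are active, giving $\binom{l_0+1}{k_0}\binom{l_1}{k-k_0}$; (ii) how the $u$ $\eta_0$-contributions split between the $k_0$ square-type and $k-k_0$ nonsquare-type active slots, giving $\binom{k_0}{u_0}\binom{k-k_0}{u-u_0}$; and (iii) the number of tuples $(z_1,\ldots,z_k)\in(\gf_r^*)^k$, where $z_i=\lambda_i g^i y_i$, satisfying the affine constraint inherited from $y_0+\sum\lambda_i y_i=0$ and having a prescribed square/nonsquare pattern. This last count is exactly $\Omega_{0\cdots 0\,1\cdots 1}$ as defined in Lemma \ref{thm-NNN3}. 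The asymmetric subscripts $2u_0+k-k_0-u$ and $k_0+u-2u_0$ arise because a ``square $\lambda_i g^i$'' coupled with an $\eta_0$-label produces a square $z_i$, while a ``nonsquare $\lambda_i g^i$'' coupled with the same $\eta_0$-label produces a nonsquare $z_i$, and similarly with the roles swapped when the label is $\eta_1$; translating the $(k_0,u_0)$-split back into the intrinsic type of the $z_i$ yields exactly these exponents.

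The hardest part will be this final combinatorial translation: verifying that the three-parameter sum collapses cleanly to the stated formula requires a careful accounting of how the class of the scalar $\lambda_i g^i$ interacts with the Gaussian-period label to determine the class of the corresponding $z_i$, especially at the $y_0$-slot where the sign analysis introduces the extra square position. Once this is resolved, the minimum distance $d=\frac{2(q-1)(r-\sqrt{r})}{(t+1)q\delta}$ is read off by taking the smallest nontrivial value ($k=2$ with the appropriate $u$) in the weight formula via Lemma \ref{lem-degree2}, and the parameters $[n,tm]$ together with cyclicity follow from Delsarte's theorem and condition (iii) in the definition of $\mathcal{C}_{(a_1,\ldots,a_t)}$. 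The intrinsic evaluation of $\Omega_{0\cdots 0\,1\cdots 1}$ (using Jacobi sums from Lemma \ref{lem_J} and the combinatorial identity postponed to Section \ref{sec-app}) is handled separately in Lemma \ref{thm-NNN3}.
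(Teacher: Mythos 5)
Your proposal does not prove the statement in question at all. The statement is Lemma \ref{lem_J}, which asserts three elementary facts about Jacobi sums over $\mathbb{F}_r$: (a) the value of $J(\varepsilon,\ldots,\varepsilon)$, (b) the vanishing of $J(\chi_1,\ldots,\chi_k)$ when some but not all $\chi_i$ are trivial, and (c) the closed-form evaluation of $J(\rho,\ldots,\rho)$ for the quadratic character $\rho$. What you have written is instead a proof sketch of Theorem \ref{thm-e3} --- the orthogonality reduction of the Hamming weight, the change of variables through the matrix $B$, the stratification by support size and by the square/nonsquare classes counted by $l_0,l_1$ of (\ref{equ-l1l2}), and the collapse of the count to the $\Omega$-quantities. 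None of this establishes (a), (b) or (c). Worse, your final sentence invokes ``Jacobi sums from Lemma \ref{lem_J}'' as an ingredient, so read as a proof of Lemma \ref{lem_J} your argument is circular: the lemma is an input to the machinery you describe (via Lemma \ref{thm-NNN2} and Lemma \ref{thm-NNN3}), not a consequence of it.

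For comparison, the paper gives no proof of this lemma either; it is quoted as standard from Berndt--Evans--Williams \cite{B-E-W}. If you wanted a self-contained argument, it would go as follows. For (a), $J(\varepsilon,\ldots,\varepsilon)$ simply counts the points of the affine hyperplane $z_1+\cdots+z_k=1$ in $\mathbb{F}_r^k$, which is $r^{k-1}$ (the paper's $q^{k-1}$ is a slip for $r^{k-1}$, as the sum runs over $\mathbb{F}_r$; this is also what the inclusion--exclusion in the proof of Lemma \ref{lem-J*}(a) requires). For (b), fix the variables attached to the trivial characters; the remaining sum factors so that some nontrivial $\chi_i$ is summed over a full line or over $\mathbb{F}_r^*$, and the orthogonality relation (\ref{equ-orthogonal1}) forces the total to vanish. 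For (c), use the Gauss sum $G(\rho)=\sum_{x\in\mathbb{F}_r^*}\rho(x)\psi(x)$ and the standard factorization: when $k$ is odd, $\rho^k=\rho$ is nontrivial and $J(\rho,\ldots,\rho)=G(\rho)^k/G(\rho^k)=G(\rho)^{k-1}$; when $k$ is even, $\rho^k=\varepsilon$ and $J(\rho,\ldots,\rho)=-G(\rho)^k/r$. Combining these with $G(\rho)^2=\rho(-1)\,r$ yields exactly $\rho(-1)^{\frac{k-1}{2}}r^{\frac{k-1}{2}}$ for $k$ odd and $-\rho(-1)^{\frac{k}{2}}r^{\frac{k-2}{2}}$ for $k$ even.
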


We now define the \textit{reduced Jacobi sums} below, which is needed in the next section.
\begin{equation}\label{equ-J*}
    J^*(\chi_1,\cdots ,\chi_k):=\sum\limits_{z_1,\cdots z_k\in \mathbb{F}^*_{r}\atop z_1+\cdots +z_k=1}\chi_1(z_1) \chi_2(z_2)\cdots \chi_k(z_k).
\end{equation}
Notice that $J^*(\chi_1,\cdots ,\chi_k)=J(\chi_1,\cdots ,\chi_k)$ if all of $\chi_1,\cdots ,\chi_k$ are non-trivial. The following results give the evaluation of $J^*(\chi_1,\cdots ,\chi_k)$ if some of $\chi_1,\cdots ,\chi_k$ are trivial. The next result is not difficult but may be of independent interest. It is essential in Section \ref{sec-app} to establish a complicated combinatorial identity, which is needed in the proofs of Theorems \ref{thm-e1} and \ref{thm-e2}.
\begin{lemma}\label{lem-J*}
(a).~$J^*(\varepsilon,\cdots ,\varepsilon)=\left\{(r-1)^k-(-1)^k\right\}/r$.
\par(b).~Define $J(\chi):=1$ for any multiplicative character $\chi$. Let $u$ be an integer such that $0\leqslant u\leqslant k-1$. If $\chi_1,\cdots ,\chi_{k-u}$ are all nontrivial multiplicative characters, then
$$J^*(\chi_1,\cdots ,\chi_{k-u},\underbrace{\varepsilon,\cdots ,\varepsilon}_{u})=(-1)^uJ(\chi_1,\cdots ,\chi_{k-u}).$$
\end{lemma}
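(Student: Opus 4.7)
The plan is to prove both parts by direct counting, decoupling the trivial characters from the nontrivial ones.

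For part (a), observe that $J^*(\varepsilon,\ldots,\varepsilon)$ equals $N_k$, the number of tuples $(z_1,\ldots,z_k)\in(\mathbb{F}_r^*)^k$ with $z_1+\cdots+z_k=1$. An inclusion--exclusion on the subset of indices forced to equal $0$ gives
\[
N_k=\sum_{j=0}^{k-1}(-1)^j\binom{k}{j}r^{k-1-j},
\]
because pinning $j<k$ coordinates to $0$ and letting the remaining $k-j\ge 1$ coordinates range freely over $\mathbb{F}_r$ with prescribed sum $1$ leaves exactly $r^{k-j-1}$ solutions, while the $j=k$ term is absent since the empty sum is $0\ne 1$. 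Comparing with $(r-1)^k=\sum_{j=0}^{k}(-1)^j\binom{k}{j}r^{k-j}$ then yields $rN_k=(r-1)^k-(-1)^k$, which is (a).

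For part (b), I would split the summation according to the value $s:=1-z_1-\cdots-z_{k-u}$. Let $f_u(s)$ denote the number of $u$-tuples in $(\mathbb{F}_r^*)^u$ summing to $s$. Since the $u$ trivial characters contribute $1$, summing first over the last $u$ coordinates rewrites the Jacobi sum as
\[
J^*(\chi_1,\ldots,\chi_{k-u},\varepsilon,\ldots,\varepsilon)=\sum_{z_1,\ldots,z_{k-u}\in\mathbb{F}_r^*}\chi_1(z_1)\cdots\chi_{k-u}(z_{k-u})\,f_u(s).
\]
A rescaling argument (dividing all coordinates by $s$) shows $f_u(s)=f_u(1)=N_u$ for every $s\in\mathbb{F}_r^*$, and summing $f_u$ over $\mathbb{F}_r$ forces $f_u(0)=(r-1)^u-(r-1)N_u$. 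Writing $f_u(s)=N_u+(f_u(0)-N_u)\mathbf{1}_{s=0}$, the Jacobi sum splits as
\[
N_u\cdot S+(f_u(0)-N_u)\cdot J(\chi_1,\ldots,\chi_{k-u}),
\]
where $S=\prod_{i=1}^{k-u}\sum_{z\in\mathbb{F}_r^*}\chi_i(z)=0$ by the orthogonality relation (\ref{equ-orthogonal1}) applied to each nontrivial $\chi_i$. Finally, invoking part (a), $f_u(0)-N_u=(r-1)^u-rN_u=(-1)^u$, yielding (b).

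The argument is essentially bookkeeping and I do not foresee a real obstacle: the key observation is that once the $s=0$ versus $s\ne 0$ dichotomy is carried out, the $s\ne 0$ contribution collapses thanks to multiplicative orthogonality, leaving only the $s=0$ piece scaled by the clean factor $(-1)^u$. In the degenerate case $k-u=1$ the convention $J(\chi):=1$ is consistent with this decomposition, since the inner sum there reduces to $\chi_1(1)=1$.
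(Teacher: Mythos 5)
Your proof is correct. For part (a) you and the paper do essentially the same thing: an inclusion--exclusion over which coordinates vanish, yours phrased as a direct count $N_k$ of tuples in $(\mathbb{F}_r^*)^k$ summing to $1$, the paper's phrased as subtracting from $J(\varepsilon,\ldots,\varepsilon)$ the contributions of all proper subsets of indices. For part (b), however, your route is genuinely different. The paper expands $J(\chi_1,\ldots,\chi_{k-u},\varepsilon,\ldots,\varepsilon)=0$ (Lemma~\ref{lem_J}(b)) according to which of the $u$ coordinates attached to trivial characters are nonzero, obtaining the recursion $J^*(\chi_1,\ldots,\chi_{k-u},\varepsilon^{u})=-\sum_{j=1}^{u}\binom{u}{j}J^*(\chi_1,\ldots,\chi_{k-u},\varepsilon^{u-j})$, and then closes it by induction on $u$ (implicitly using the alternating binomial identity). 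You instead decouple the trivial block in one step: conditioning on $s=1-z_1-\cdots-z_{k-u}$, using dilation invariance to see that the fibre count $f_u(s)$ is constant on $\mathbb{F}_r^*$, and killing that constant part by the orthogonality relation applied to each nontrivial $\chi_i$, so that only the $s=0$ fibre survives, weighted by $f_u(0)-N_u=(r-1)^u-rN_u=(-1)^u$ via part (a). Your argument avoids induction entirely and makes the origin of the factor $(-1)^u$ transparent; the paper's recursion is shorter to set up but leaves the inductive verification to the reader. Both treatments are consistent with the convention $J(\chi):=1$ in the degenerate case $k-u=1$, which in your setup is forced by $\sum_{z_1=1}\chi_1(z_1)=1$.
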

\begin{proof} By definition, we have
$$J^*(\varepsilon,\cdots ,\varepsilon)=J(\varepsilon,\cdots ,\varepsilon)-\sum\limits_{\mathcal{I}} \sum\limits_{\sum\limits_{i\in \mathcal{I}} z_i=1}\varepsilon(\prod\limits_{i\in \mathcal{I}} z_i),$$
where the subscript $\mathcal{I}$ under the $\sum$ symbol means to sum over all subsets $\mathcal{I}$ such that $\mathcal{I}\subsetneqq\{1,2,\cdots ,k\}$. Using the Inclusion-exclusion principle, Part (a) of Lemma \label{lem-J*} can be easily proved. Now for Part (b), we have $\mathcal{I}'\subsetneqq\{k-u+1,\cdots ,k\}$, then
$$\begin{array}{l}J^*(\chi_1,\cdots ,\chi_{k-u},\underbrace{\varepsilon,\cdots ,\varepsilon}_{u})\\
\ =J(\chi_1,\cdots ,\chi_{k-u},\underbrace{\varepsilon,\cdots ,\varepsilon}_{u})-\sum\limits_{\mathcal{I}'}\sum\limits_{\sum_{j=1}^{k-u} z_j+\sum\limits_{i\in\mathcal{I}'}z_i=1}\chi_1(z_1)\chi_2(z_2)\cdots\chi_k(z_{k-u})\varepsilon(\prod\limits_{i\in \mathcal{I}'}z_i)\\
\ =0-\binom{u}{1}J^*(\chi_1,\cdots ,\chi_{k-u},\underbrace{\varepsilon,\cdots ,\varepsilon}_{u-1})- \binom{u}{2}J^*(\chi_1,\cdots ,\chi_{k-u},\underbrace{\varepsilon,\cdots ,\varepsilon}_{u-2})-\cdots -
\binom{u}{u}J^*(\chi_1,\cdots ,\chi_{k-u}). \end{array}$$
By induction, Part (b) can be also verified.
\end{proof}

\section{Proof of Theorem \ref{thm-e1}}\label{sec-main}

\subsection{The weight distribution of $\mathcal{C}_{(a_1,\cdots ,a_t)}$ and Summation of Gaussian periods}\label{sec-Weight of C}

We now consider the weight distribution of the cyclic code $\mathcal{C}_{(a_1,\cdots,a_t)}$ given in (\ref{def}).
Using the orthogonal relation (\ref{add-orth}) and some computational techniques, in \cite{Y-X-D12} we haved expressed the Hamming weight of the codeword $c(x_1,\cdots,x_{t})$ by
\begin{eqnarray}\label{equ-Weight of C}
w_H(c(x_1,\cdots,x_{t})) =\frac{(r-1)(q-1)}{q \delta}-\frac{N(q-1)}{ eq \delta}\sum\limits_{h=0}^{e-1}
\bar\eta^{(N,r)}_{g^{h}\cdot\sum\limits_{\tau=1}^{t} x_\tau \beta_\tau^{h}},
\end{eqnarray}
where $g=\gamma^a$, $\beta_\tau=\gamma^{\frac{r-1}{e}\Delta_\tau}$ for $1\leqslant \tau\leqslant t$ and $\bar\eta^{(N,r)}_{v}=\sum\limits_{z\in C_{0}^{(N,r)}}\psi(vz)$ for any $v\in\mathbb{F}_{r}$. These $\bar\eta^{(N,r)}_v$ are called the \textit{modified Gaussian periods}, given by
$$\left\{
    \begin{array}{l}
     \bar\eta^{(N,r)}_0=\frac{r-1}{N}\\
     \bar\eta^{(N,r)}_{\gamma^{i}}=\eta_i^{(N,r)}\quad \hbox{ for $0\leqslant i\leqslant N-1$,}
    \end{array}
  \right.$$
where these $\eta_i^{(N,r)}$ are the ordinary Gaussian periods. Thus, to compute the weight distribution of cyclic code $\mathcal{C}_{(a_1,\cdots,a_t)}$, it suffices to compute the value distribution of the sum
\begin{equation}\label{equ-Tx}
T(x_1,\cdots ,x_{t}):=\sum\limits_{h=0}^{e-1}\bar\eta^{(N,r)}_{g^{h}
\cdot\sum_{\tau=1}^{t} x_\tau \beta_\tau^{h}},\quad (\forall x_1,\cdots ,x_t\in \mathbb{F}_{r}).\end{equation}
Now we deal with it under the assumption of $N=2$ and $t=e-1 \ge 2$.

\subsection{$N=2$ and $t=e-1 \ge 2$}
Since $N=2$, it is easy to see that $q$ is odd, $m$ is even and $-1=\gamma^{\frac{q^m-1}{2}}$ is a square. For simplicity, let us write
\[\bar\eta_x:=\bar\eta^{(2,r)}_x, \quad \forall x \in \gf_r. \]
Make a change of variables
\[y_h=\sum_{\tau=1}^{t} x_\tau \beta_\tau^{h}, \quad 0 \le h \le t=e-1, \]
which can be written as
\begin{equation}\label{equ-xtoy}
    (y_0,\cdots ,y_{t})^T=B(x_1,\cdots ,x_t)^T
\end{equation}
for some $(t+1) \times t$ matrix $B$. Recall that $\beta=\gamma^{(r-1)/e}$ is an $e$-th root of unity in $\mathbb{F}_{r}$.
Since $\beta_{\tau}=\beta^{\triangle_{\tau}}$, the matrix $B$ consists of $t$ columns of the Vandermonde matrix $A$, defined by (\ref{matrix}).
By \cite[Lemma 18]{Y-X-D12}, any $t$ rows of $B$ are linearly independent over $\gf_q$. This gives a one-to-one correspondence between $(y_1,\ldots,y_t)$ and $(x_1,\ldots,x_t)$ and a relation
\begin{eqnarray} \label{3:xiong} y_0+ \sum_{h=1}^t\lambda_hy_h=0, \quad  \mbox{ for some } 0 \ne \lambda_h \in \gf_{q^{m}} \, \forall h. \end{eqnarray}
We define $\tilde{\lambda}_h\ (1\leqslant h\leqslant t)$ as
\begin{equation}\label{equ-lambda}
  \tilde{\lambda}_h=\left\{\begin{array}{ll}1,&\mbox{if $\lambda_hg^h$ is a square in $\mathbb{F}_{r}$;}\\
  \gamma,&\mbox{if $\lambda_hg^h$ is a nonsquare in $\mathbb{F}_{r}$,}\end{array}\right.
\end{equation}
and we change variables again $\lambda_hy_h \to y_h$, then we see that to compute the weight distribution of the cyclic code $\mathcal{C}_{(a_1,\cdots,a_t)}$, it suffices to compute the value distribution of the sum
\begin{equation}\label{equ-Ty}
\widetilde{T}(y_0,\cdots ,y_{t}):=\bar\eta_{y_0}+\sum\limits_{h=1}^{t}\bar\eta_{\tilde{\lambda}_hy_h},\quad \forall \, (y_1,\cdots ,y_t)\in \mathbb{F}_{r}^t,\end{equation}
where $y_0:=y_0(y_1,\ldots,y_t)$ satisfies 
\begin{equation}\label{eqn-y0}
  y_0+\sum\limits_{h=1}^t y_h=0
\end{equation}

\subsection{Proof of Theorem \ref{thm-e1}.}\label{sec-sub_e4t3}

When $2|a$, then $g=\gamma^a$ is a square. Moreover, $e|(q^{m/2}-1)$ means that $\beta=\gamma^{(q^{m/2}+1)(q^{m/2}-1)/{e}} \in \gf_{q^{m/2}}$, hence the matrix $A$ is defined over $\gf_{q^{m/2}}$, so are all the $\lambda_h$ in (\ref{3:xiong}), thus $\lambda_h$ are all squares in $\gf_{q^m}$, that is, $\tilde{\lambda}_h=1\ (\forall h)$.

To study the value distribution of $\widetilde{T}:=\widetilde{T}(y_0,\ldots,y_t)$, we will divide the space of $(y_1,\ldots,y_t) \in \gf_r^t$ according to $s$, which counts the number of $i$'s ($0 \le i \le t$) such that $y_i=0$. Obviously $0 \le s \le t+1$.

If $s \ge t$, i.e., at least $t$ terms of $y_0,y_1,\ldots,y_t$ equal to 0, then all of them equal to 0, $\widetilde{T}=(t+1)\bar\eta_0$ and the frequency is 1.

If $s=t-1$, i.e., exactly $(t-1)$ terms of $y_0,y_1,\ldots,y_t$ equal to 0, say for example the two terms which are not 0 are $y_i,y_j$ for some $0 \le i < j \le t$, the number of choices of such $i,j$ is $\binom{t+1}{2}$, and the constraint (\ref{eqn-y0}) becomes $y_i+y_j=0$, or $y_i=-y_j$. Hence for this $i,j$ we find that
\[\widetilde{T}=(t-1)\bar\eta_0+\bar\eta_{y_j}+\bar\eta_{-y_j}=(t-1)\bar\eta_0+2\bar\eta_{y_j}.\]
So the value distribution of $\widetilde{T}$ for $s=t-1$ is as follows:
$$\left.
    \begin{array}{ll}
      \mbox{Value} \, \, \widetilde{T} & \hbox{Frequency} \\
      (t-1)\bar\eta_0+2\eta_0, & \frac{r-1}{2} \cdot \binom{t+1}{2} \\[2mm]
      (t-1)\bar\eta_0+2\eta_1, & \frac{r-1}{2} \cdot \binom{t+1}{2} \\    \end{array}
  \right.$$

Now suppose in general $s=t-k$ for some $k$ with $1 \le k \le t$. Say the $(k+1)$ terms which are not 0 are $y_{i_0},y_{i_1}, \ldots,y_{i_k}$ for some $0 \le i_0< i_1 < \cdots <i_k \le t$. The number of ways to choose such $i_j$'s is $\binom{t+1}{k+1}$, and for such $i_j$'s, the constraint (\ref{eqn-y0}) becomes
\[y_{i_0}+y_{i_1}+\cdots+y_{i_k}=0,\]
and we find that
\[\widetilde{T}=(t-k)\bar\eta_0+\bar\eta_{y_{i_0}}+\bar\eta_{y_{i_1}}+\cdots+\bar\eta_{y_{i_k}}.\]
In order to compute the value distribution of $\widetilde{T}$ for these cases, it suffices to compute for any positive integer $u$ and any sequence $i_1,\cdots,i_{u},i_{u+1}\in\{0,1\}$ the value $\Omega_{i_1\cdots i_ui_{u+1}}$ given by
\begin{equation}\label{equ-NNN}
 \Omega_{i_1\cdots i_ui_{u+1}}:=\#\left\{(x_1,\cdots ,x_u)\in (\mathbb{F}_{r}^*)^u\ \left|\ x_1\in C_{i_1}^{(2,r)},\cdots, x_u\in C_{i_u}^{(2,r)}, \sum\limits_{j=1}^ux_j\in C_{i_{u+1}}^{(2,r)}\right\}\right..
\end{equation}
We will prove in Section \ref{sec-app} that the value $\Omega_{i_1\cdots i_ui_{u+1}}$ depends only on the number of $0$'s and $1$'s in the sequence $i_1,\ldots,i_{u+1}$. More precisely for any $u+v \ge 1$ we have (see Lemma \ref{thm-NNN3} in Section \ref{sec-app})
\[\Omega_{{\scriptsize \underbrace{0 \cdots 0}_u \underbrace{1\cdots 1}_v}}=\frac{r-1}{r 2^{u+v+1}}\bigg\{2(r-1)^{u+v-1}+(-1)^{u+v} \left\{(1+\sqrt{r})^u(1-\sqrt{r})^v+(1-\sqrt{r})^u(1+\sqrt{r})^v\right\}\bigg\}.\]
Note that the number of ways to choose a fixed $u \ge 0$ is $\binom{k+1}{u}$. So, for the case that $s=t-k$, $1 \le k \le t$, the value distribution of $\widetilde{T}$ is given as follows
$$\left.
    \begin{array}{ll}
      \mbox{Value} \,\, \widetilde{T} & \hbox{Frequency} \, (\forall u,v \ge 0, u+v=k+1)\\
      (t-k)\bar\eta_0+u\eta_0+v \eta_1, & \binom{t+1}{k+1}\binom{k+1}{u} \Omega_{{\scriptsize \underbrace{0 \cdots 0}_u \underbrace{1\cdots 1}_v}} \\
    \end{array}
  \right.$$
As for the values $\bar\eta_0,\eta_0,\eta_1$, we have $\bar\eta_0=\frac{r-1}{2}$ and from Lemma \ref{lem-degree2}
\begin{eqnarray*}
\left\{
\begin{array}{lll}
\eta_0=\frac{-1-\sqrt{r}}{2}, & \eta_1=\frac{-1+\sqrt{r}}{2}, & \mbox{if $q\equiv 1 \pmod{4}$}, \\
\eta_0=\frac{-1-(-1)^{ms/2}\sqrt{r}}{2}, & \eta_1=\frac{-1+(-1)^{ms/2}\sqrt{r}}{2}, & \mbox{if $q\equiv 3 \pmod{4}$}. \\
\end{array}
\right.
\end{eqnarray*}
Now we have obtained the value distribution of $\widetilde{T}$. Returning to (\ref{equ-Tx}) and (\ref{equ-Weight of C}) gives us the weight distribution of the cyclic code $\mathcal{C}_{(a_1,\cdots,a_t)}$, which is summarized in Tables \ref{Table1} and \ref{Table2} in Theorem \ref{thm-e1}. This completes the proof of Theorem \ref{thm-e1}.

\section{Proof of Theorem \ref{thm-e2} and Theorem \ref{thm-e3}}\label{sec-mainII}

\subsection{Proof of Theorem \ref{thm-e3}}\label{sec-main3}


Recall from (\ref{equ-Ty}) and (\ref{eqn-y0}) that to compute the weight distribution of the cyclic code $\mathcal{C}_{(a_1,\cdots,a_t)}$, it suffices to compute the value distribution of the sum
\begin{equation}\label{equ-Ty2}
\widetilde{T}(y_0,y_1,\cdots,y_{l_0},\gamma z_1,\cdots ,\gamma z_{l_1}):=\sum\limits_{h=0}^{l_0}\bar\eta_{y_h}+
\sum\limits_{h=1}^{l_1}\bar\eta_{\gamma z_h},\quad \forall \, (y_1,\cdots,y_{l_0},z_1,\ldots,z_{l_1})\in \mathbb{F}_{r}^{l_0+l_1},\end{equation}
where $l_0,l_1$ are defined by (\ref{equ-l1l2}) so that $l_0+l_1=t$ and $y_0:=y_0(y_1,\ldots,y_{l_0},z_1,\ldots,z_{l_1})$ satisfies 
\begin{eqnarray} \label{eqn-y02}
y_0+y_1+\cdots+y_{l_0}+ z_1+\cdots+ z_{l_1}=0.
\end{eqnarray}
To study the value distribution of $\widetilde{T}$ in (\ref{equ-Ty2}), we consider the different subcases according to different $(k_0,k_1)$, where $k_0,k_1$ are defined by
$$\begin{array}{l}
  k_0:=\#\{i \mid 0\leqslant i\leqslant l_0, y_i\neq 0\}; \\
  k_1:=\#\{i \mid 1\leqslant i\leqslant l_1, z_i\neq 0\}.
\end{array}$$

If $k_0+k_1 \le 1$, by (\ref{eqn-y02}), all of $y_0,y_1,\ldots,y_{l_0},z_1,\ldots,z_{l_1}$ are 0, the frequency is 1 and $\widetilde{T}=(t+1) \bar\eta_0$.

If $k_0+k_1 \ge 2$, the number of ways to choose exactly $k_0$ non-zero terms in $y_0,\ldots,y_{l_0}$ and exactly $k_1$ non-zero terms in $z_1,\ldots,z_{l_1}$ is $\binom{l_0+1}{k_0}\binom{l_1}{k_1}$. Once they are chosen, without loss of generality we may assume that they are $y_1,\ldots,y_{k_0}$ and $z_1,\ldots,z_{k_1}$. Then in this case we have
\[\widetilde{T}=(t+1-k_0-k_1) \bar\eta_0+\sum_{i=1}^{k_0}\bar\eta_{y_i}+
\sum_{i=1}^{k_1}\bar\eta_{\gamma z_i},\]
and the constraint (\ref{eqn-y02}) becomes
\[y_1+\cdots+y_{k_0}+z_1+\cdots+z_{k_1}=0. \]

In order to compute the value distribution of $\widetilde{T}$ for these cases, let us consider for any $i_1,\ldots,i_{k_0},j_1,\ldots,j_{k_1} \in \{0,1\}$ the value $\Omega_{i_1\cdots i_{k_0};j_{1}\cdots j_{k_1}}^{'}$, given by
\begin{equation*}\label{equ-NNN2}
 \Omega_{i_1\cdots i_{k_0};j_{1}\cdots j_{k_1}}^{'}:=\#\left\{(y_1,\cdots ,y_{k_0};z_1,\cdots ,z_{k_1})\in (\mathbb{F}_{r}^*)^{k_0+k_1}\ \left|\ {y_{u}\in C_{i_{u}}^{(2,r)}, \gamma z_{v} \in C_{j_v}^{(2,r)}, 1 \le u \le k_0,1 \le v \le k_1}\atop {y_1+\cdots+y_{k_0}+z_1+\cdots+z_{k_1}=0}\right\}\right..
\end{equation*}
For any $ i \in \{0,1\}$, define $\bar{i} \in \{0,1\}$ by $\bar{i} \equiv i+1 \pmod{2}$. Clearly
\[\Omega_{i_1\cdots i_{k_0};j_{1}\cdots j_{k_1}}^{'}=\Omega_{i_1\cdots i_{k_0}\bar{j}_{1}\cdots \bar{j}_{k_1}}, \]
which is defined in (\ref{equ-NNN}) and is evaluated in Section \ref{sec-app}. In $\{i_1,\ldots,i_{k_0}\}$, let $u_0$ be the number of $0$'s and $u_1$ be the number of $1$'s; similarly, in $\{j_1,\ldots,j_{k_1}\}$, let $v_0$ be the number of $0$'s and $v_1$ be the number of $1$'s. Given such $u_0,u_1,v_0,v_1$, we have
\[\widetilde{T}=(t+1-k_0-k_1) \bar\eta_0+(u_0+v_0) \eta_0+(u_1+v_1)\eta_1, \]
and the frequency is
\[\binom{l_0+1}{k_0}\binom{l_1}{k_1}\binom{k_0}{u_0}\binom{k_1}{v_0}\Omega_{{\scriptsize \underbrace{0 \cdots 0}_{u_0+v_1} \underbrace{1\cdots 1}_{u_1+v_0}}}. \]
Now let $k$ and $u$ be fixed such that $k_0+k_1=k$ and $u_0+v_0=u$, where $0 \le u \le k_0+k_1=k$ and $2 \le k \le l_0+l_1+1=t+1$, we conclude that $\widetilde{T}$ takes the value
\begin{eqnarray} \label{4:t} \widetilde{T}=(t+1-k) \bar\eta_0+u \eta_0+(k-u)\eta_1, \end{eqnarray}
and the frequency is
\begin{eqnarray} \label{4:fre} \sum_{k_0=0}^k\sum_{u_0=0}^u\binom{l_0+1}{k_0}\binom{l_1}{k-k_0}
\binom{k_0}{u_0}\binom{k-k_0}{u-u_0}\Omega_{{\scriptsize \underbrace{0 \cdots 0}_{2u_0+k-k_0-u,} \underbrace{1\cdots 1}_{k_0+u-2u_0}}}. \end{eqnarray}
This, after returning to (\ref{equ-Weight of C}), provides the weight distribution of the cyclic code $\mathcal{C}_{(a_1,\ldots,a_t)}$ for the general case $N=2,t=e-1\geqslant 2$.


\subsection{Proof of Theorem \ref{thm-e2}}\label{sec-main2}
From $N=2=\gcd\left((q^m-1)/(q-1),4a\right)$ and $t=e-1=3$, it is easy to see that $q \equiv 1 \pmod{4}$, $m \equiv 2 \pmod{4}$ and $e=4\mid(q^{m/2}-1)$. If $2|a$, the weight distribution has been obtained from (i) of Theorem \ref{thm-e1} with $t=3$, this is Table \ref{Table3} in Theorem \ref{thm-e2}. If $2 \nmid a$, we use Theorem \ref{thm-e3} to calculate the weight distribution. In this case $l_0=1,l_1=2$, from (\ref{4:t}) and (\ref{4:fre}), for any $k,u$ with $2 \le k \le 4,\, 0 \le u \le k$, the sum $\widetilde{T}$ takes the value
\[\widetilde{T}=(4-k) \bar\eta_0+u \eta_0+(k-u)\eta_1, \]
with frequency
\[\sum_{k_0=0 }^k\sum_{u_0=0}^u\binom{2}{k_0}\binom{2}{k-k_0}
\binom{k_0}{u_0}\binom{k-k_0}{u-u_0}\Omega_{{\scriptsize \underbrace{0 \cdots 0}_{2u_0+k-k_0-u,} \underbrace{1\cdots 1}_{k_0+u-2u_0}}}. \]
Using the values
\begin{equation*}
  \left\{\begin{array}{l}
  \Omega_{00}=\Omega_{11}=\frac{r-1}{2}; \quad \Omega_{01}=0;\\
  \Omega_{000}=\Omega_{111}=\frac{r-1}{8}(r-5);\\
  \Omega_{001}=\Omega_{011}=\frac{(r-1)^2}{8};\\
  \Omega_{0000}=\Omega_{1111}=\frac{r-1}{16}(r^2-2r+9);\\
  \Omega_{0001}=\Omega_{0111} =\frac{r-1}{16}(r^2-4r+3);\\
  \Omega_{0011}=\frac{(r-1)^3}{16},
  \end{array}\right.
\end{equation*}
which we can obtain from Lemma \ref{thm-NNN3} in Section \ref{sec-app}, we find that for $k=2$,
$$\left.
    \begin{array}{ll}
      \mbox{Value} & \hbox{Frequency} \\
      2\bar\eta_0+2\eta_0, & \Omega_{00}+4\Omega_{01}+\Omega_{11}=r-1 \\[2mm]
      2\bar\eta_0+2\eta_1, & \Omega_{00}+4\Omega_{01}+\Omega_{11} =r-1 \\[2mm]
      2\bar\eta_0+\eta_0+\eta_{1}, & 4\Omega_{00}+4\Omega_{01}+4\Omega_{11} =4(r-1)\\[2mm]
    \end{array}
  \right.$$
and for $k=3$,
$$\left.
    \begin{array}{ll}
      \mbox{Value} & \hbox{Frequency} \\
      \bar\eta_0+3\eta_0, & 2\Omega_{001}+2\Omega_{011}=\frac{(r-1)^2}{2} \\[2mm]
      \bar\eta_0+3\eta_1, & 2\Omega_{001}+2\Omega_{011}=\frac{(r-1)^2}{2}\\[2mm]
      \bar\eta_0+2\eta_0+\eta_{1},&4\Omega_{011}+2\Omega_{000}+2\Omega_{111}+4\Omega_{001}\\
                                  &\quad =\frac{(r-1)}{2}(3r-7)\\[2mm]
      \bar\eta_0+\eta_0+2\eta_{1},&4\Omega_{011}+2\Omega_{000}+2\Omega_{111}+4\Omega_{001}\\
                                  &\quad =\frac{(r-1)}{2}(3r-7)
    \end{array}
  \right.$$
and for $k=4$,
$$\left.
    \begin{array}{ll}
      \mbox{Value} & \hbox{Frequency} \\
      4\eta_0, & \Omega_{0011}=\frac{(r-1)^3}{16} \\[2mm]
      4\eta_1, & \Omega_{0011}=\frac{(r-1)^3}{16} \\[2mm]
      3\eta_0+\eta_1, & 2\Omega_{0111}+2\Omega_{0001}=\frac{(r-1)}{4}(r^2-4r+3)  \\[2mm]
      \eta_0+3\eta_1, & 2\Omega_{0111}+2\Omega_{0001}=\frac{(r-1)}{4}(r^2-4r+3)  \\[2mm]
      2(\eta_0+\eta_{1}),&\Omega_{1111}+4\Omega_{0011}+\Omega_{0000}=\frac{(r-1)}{8}(3r^2-6r+11).
    \end{array}
  \right.$$

Now we have obtained the value distribution of $\widetilde{T}$. Returning to (\ref{equ-Tx}) and (\ref{equ-Weight of C}) gives us the weight distribution of the cyclic code $\mathcal{C}_{(a_1,a_2,a_3)}$ with $2 \nmid a$, which is summarized in Table \ref{Table4} in Theorem \ref{thm-e2}. This completes the proof of Theorem \ref{thm-e2}.

\section{Appendix: Calculation of $\Omega_{i_1\cdots i_u i_{u+1}}$}\label{sec-app}

Recall that for positive integer $u$ and any sequence $i_1,\cdots,i_{u},i_{u+1}\in\{0,1\}$, the value $\Omega_{i_1\cdots i_ui_{u+1}}$ is defined by
\begin{equation*}
 \Omega_{i_1\cdots i_ui_{u+1}}:=\#\left\{(x_1,\cdots ,x_u)\in (\mathbb{F}_{r}^*)^u\ \left|\ x_1\in C_{i_1}^{(2,r)},\cdots, x_u\in C_{i_u}^{(2,r)}, \sum\limits_{j=1}^ux_j\in C_{i_{u+1}}^{(2,r)}\right\}\right..
\end{equation*}

We first prove that the value of $\Omega_{i_1\cdots i_ti_{u+1}}$ is related to reduced quadratic Jacobi sums which were introduced in Section \ref{sec-pre} before.

\begin{lemma}\label{thm-NNN}
The number $\Omega_{i_1\cdots i_ui_{u+1}}$ defined above equals to
$$\frac{r-1}{2^{u+1}}\sum\limits_{0 \le v_2,\cdots,v_{u+1} \le 1} (-1)^{\sum\limits_{j=2}^{u+1}(i_1+i_j)v_j} \rho\left((-1)^{\sum\limits_{j=2}^{u}v_j}\right) J^*(\rho^{v_2},\cdots,\rho^{v_{u+1}}),$$
where $\rho$ is the quadratic multiplicative character of $\mathbb{F}_{r}$.
\end{lemma}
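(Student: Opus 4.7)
My plan is to reduce $\Omega_{i_1 \cdots i_u i_{u+1}}$ to a weighted count of tuples $(z_2,\ldots,z_{u+1}) \in (\mathbb{F}_r^*)^u$ with $z_2+\cdots+z_{u+1}=1$ by a scaling substitution that normalizes the sum $x_1+\cdots+x_u$ to $1$. Once in that form, the constraint matches the definition of a reduced Jacobi sum, and expanding each cyclotomic class indicator in terms of the quadratic character $\rho$ will produce the desired formula term by term.

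The first step is to isolate $x_1$. I would fix $x_1 \in C_{i_1}^{(2,r)}$ (contributing a factor $(r-1)/2$) and introduce new variables $z_j := -x_j/x_1$ for $2 \le j \le u$ together with $z_{u+1} := (x_1+x_2+\cdots+x_u)/x_1$. A direct computation gives $z_2 + z_3 + \cdots + z_{u+1} = 1$, and $(x_2,\ldots,x_u) \leftrightarrow (z_2,\ldots,z_u)$ is a bijection on $(\mathbb{F}_r^*)^{u-1}$; the constraint $x_1+\cdots+x_u \ne 0$ built into $x_1+\cdots+x_u \in C_{i_{u+1}}^{(2,r)}$ forces $z_{u+1} \in \mathbb{F}_r^*$ as well. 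Using $\rho(x_j) = \rho(-1)\rho(x_1)\rho(z_j)$ for $2 \le j \le u$ and $\rho(x_1+\cdots+x_u) = \rho(x_1)\rho(z_{u+1})$, the cyclotomic class conditions on the $x_j$'s translate into class conditions on the $z_j$'s, with the extra twist $\rho(-1)$ appearing only in the indices $j=2,\ldots,u$.

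Next, I would apply the elementary expansion
$$\mathbf{1}_{z \in C_c^{(2,r)}} = \tfrac{1}{2}\bigl(1 + (-1)^c \rho(z)\bigr), \qquad z \in \mathbb{F}_r^*,$$
to each of the $u$ cyclotomic constraints on $z_2,\ldots,z_{u+1}$, multiply out, and interchange summations. For each $(v_2,\ldots,v_{u+1}) \in \{0,1\}^u$, the inner sum becomes
$$\sum_{\substack{z_2,\ldots,z_{u+1}\in\mathbb{F}_r^* \\ z_2+\cdots+z_{u+1}=1}} \prod_{j=2}^{u+1}\rho^{v_j}(z_j) \;=\; J^*(\rho^{v_2},\ldots,\rho^{v_{u+1}})$$
by the definition (\ref{equ-J*}). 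Finally, collecting signs: the indices $j=2,\ldots,u$ each contribute a factor $\bigl(\rho(-1)(-1)^{i_1+i_j}\bigr)^{v_j}$ while the $j=u+1$ index contributes $\bigl((-1)^{i_1+i_{u+1}}\bigr)^{v_{u+1}}$. Using $\rho(-1)^{v_j} = \rho((-1)^{v_j})$ for $v_j \in \{0,1\}$ together with multiplicativity of $\rho$, these consolidate into $\rho\bigl((-1)^{\sum_{j=2}^{u} v_j}\bigr)\cdot(-1)^{\sum_{j=2}^{u+1}(i_1+i_j)v_j}$, matching the statement exactly.

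The only real care needed is the bookkeeping of signs — in particular the asymmetry that $\rho(-1)$ appears only for $2 \le j \le u$ and not for $j=u+1$, reflecting the asymmetric choice $z_j = -x_j/x_1$ versus $z_{u+1} = +(x_1+\cdots+x_u)/x_1$. I do not expect any real obstacle at this stage; the substantial work, I suspect, lies later in evaluating this character-sum identity into the closed-form expression for $\Omega_{\underbrace{0\cdots 0}_u \underbrace{1\cdots 1}_v}$ stated as Lemma \ref{thm-NNN3}, which will require Lemma \ref{lem-J*}, part (c) of Lemma \ref{lem_J}, and the combinatorial identity postponed to Section \ref{sec-app}.
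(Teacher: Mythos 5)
Your proof is correct and follows essentially the same route as the paper: the paper likewise normalizes by $x_1$ (via the substitution $x_j \mapsto -x_1 x_j$ for $2\le j\le u$) and expands the quadratic-class indicators into characters so that the inner sums become the reduced Jacobi sums $J^*(\rho^{v_2},\ldots,\rho^{v_{u+1}})$. The only cosmetic difference is that the paper expands all $u+1$ indicators by the orthogonality relation and then lets the free sum over $x_1$ force $\chi_1\chi_2\cdots\chi_{u+1}=\varepsilon$ and produce the factor $r-1$, whereas you fix $x_1\in C_{i_1}^{(2,r)}$ up front to get the factor $(r-1)/2$ directly; your sign bookkeeping, including the asymmetric $\rho(-1)$ twist on the indices $2\le j\le u$, agrees with the paper's.
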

\begin{proof}
For $x\in \mathbb{F}_{r}^*$, let $\chi$ denote a multiplicative character of $\mathbb{F}_{r}$. It is easy to check that
\begin{equation}\label{equ-orthogonal2}
    \frac{1}{2}\sum\limits_{\chi^2=\varepsilon}\chi(x \gamma^i)=
\left\{
  \begin{array}{ll}
    1, & \hbox{if\ }x\in C_i^{(L,r)}; \\
    0, & \hbox{otherwise.}
  \end{array}
\right.
\end{equation}
Suppose $\chi_1,\chi_2,\cdots ,\chi_{u+1}$ denote multiplicative characters of $\mathbb{F}_{r}$. By the relation (\ref{equ-orthogonal2}), we have
$$\begin{array}{l}
 \Omega_{i_1\cdots i_ui_{u+1}}\\
=\sum\limits_{x_1,\cdots x_{u}\in\mathbb{F}_{r}^*}\left[\frac{1}{2}\sum\limits_{\chi_1^2=\varepsilon}\chi_1(x_1 \gamma^{i_1})\right]
\cdots
\left[\frac{1}{2}\sum\limits_{\chi_u^2=\varepsilon}\chi_u(x_u \gamma^{i_u})\right]
\left[\frac{1}{2}\sum\limits_{\chi_{u+1}^2=\varepsilon}\chi_{u+1}(\gamma^{i_{u+1}}\sum\limits_{j=1}^{u}x_j)\right].
\end{array}$$
Expanding the right hand side and changing the order of summation we obtain
$$\begin{array}{l}
\frac{1}{2^{u+1}}\sum\limits_{\chi_j^2=\varepsilon\atop j=1,\cdots,u+1}\chi_1(\gamma^{i_1})\cdots\chi_{u}(\gamma^{i_{u}})\chi_{u+1}(\gamma^{i_{u+1}})
\\
\qquad \cdot\sum\limits_{x_1, \ldots,x_u\in \mathbb{F}_{r}^*}\chi_1(x_1)\cdots\chi_{u}(x_u)
\chi_{u+1}(x_1+x_2+\cdots+x_u), \end{array}$$
which gives
$$\begin{array}{l}
\frac{1}{2^{u+1}}\sum\limits_{\chi_j^2=\varepsilon\atop j=1,\cdots,u+1}\chi_1(\gamma^{i_1})\chi_2(-\gamma^{i_2})\cdots\chi_{u}(-\gamma^{i_{u}})\chi_{u+1}(\gamma^{i_{u+1}})
\\
\qquad \cdot\sum\limits_{x_1, \ldots,x_u\in \mathbb{F}_{r}^*}\chi_1\chi_2 \cdots \chi_{u+1}(x_1) \chi_2(x_2)\cdots\chi_{u}(x_u)
\chi_{u+1}(1-x_2-\cdots-x_u). \end{array}$$
This is
$$\begin{array}{l}
\frac{r-1}{2^{u+1}}\sum\limits_{\chi_j^2=\varepsilon\atop j=2,\cdots,u+1}\chi_2(-\gamma^{i_1+i_2})\cdots\chi_u(-\gamma^{i_1+i_u})
\chi_{u+1}(\gamma^{i_1+i_{u+1}})\\
\qquad \cdot\sum\limits_{x_2,\cdots,x_{u}\in \mathbb{F}_{r}^*}\chi_2(x_2)\cdots\chi_u(x_u)\chi_{u+1}(1-x_1-\cdots-x_u).\end{array}$$
So we obtain
$$\begin{array}{l}
\Omega_{i_1\cdots i_ui_{u+1}}=\frac{r-1}{2^{u+1}}\sum\limits_{0 \le v_2,\cdots,v_{u+1} \le 1}(-1)^{\sum\limits_{j=2}^{u}(i_1+i_j)v_j} \rho\left((-1)^{\sum\limits_{j=2}^{u}v_j}\right)J^*(\rho^{v_2},\cdots,\rho^{v_{u+1}}).
\end{array}$$
This completes the proof of Lemma \ref{thm-NNN}.
\end{proof}

\begin{lemma}\label{thm-NNN2}
Suppose that $-1$ is a square in $\gf_r$, then
\[
\Omega_{i_1\cdots i_ui_{u+1}}=\frac{r-1}{2^{u+1}}\left\{\frac{1}{r}\bigg((r-1)^u-(-1)^u\bigg)-(-1)^u \sum_{1 \le l \le \frac{u+1}{2} }r^{l-1}\sum_{1 \le j_1 <j_2<\cdots<j_{2l} \le u+1} (-1)^{\sum\limits_{k=1}^{2l}i_{j_k}} \right\} .
\]
\end{lemma}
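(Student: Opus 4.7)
The plan is to start from the formula for $\Omega_{i_1\cdots i_ui_{u+1}}$ given in Lemma \ref{thm-NNN} and evaluate the reduced Jacobi sums explicitly using the hypothesis that $\rho(-1)=1$. Since $-1$ is a square in $\gf_r$, every factor $\rho\bigl((-1)^{\sum_{j} v_j}\bigr)$ equals $1$, so the formula collapses to
\[
\Omega_{i_1\cdots i_ui_{u+1}}=\frac{r-1}{2^{u+1}}\sum_{(v_2,\ldots,v_{u+1})\in\{0,1\}^u} (-1)^{\sum_{j=2}^{u+1}(i_1+i_j)v_j}\, J^*(\rho^{v_2},\ldots,\rho^{v_{u+1}}).
\]
I will then split this sum according to $s:=|\{j:v_j=1\}|$, i.e.\ the number of non-trivial quadratic characters.

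For $s=0$, Lemma \ref{lem-J*}(a) gives $J^*(\varepsilon,\ldots,\varepsilon)=\bigl((r-1)^u-(-1)^u\bigr)/r$, producing the first term of the claimed identity. For $s\ge 1$, Lemma \ref{lem-J*}(b) reduces $J^*$ to $(-1)^{u-s}J(\underbrace{\rho,\ldots,\rho}_{s})$, and then Lemma \ref{lem_J}(c) (together with $\rho(-1)=1$) evaluates the latter as $-r^{(s-2)/2}$ if $s$ is even and $r^{(s-1)/2}$ if $s$ is odd. A short case check shows that in both parities $s=2l-1$ and $s=2l$ one obtains the same combined coefficient
\[
(-1)^{u-s}J(\underbrace{\rho,\ldots,\rho}_{s})=-(-1)^u r^{l-1}.
\]
This coincidence is the key algebraic observation that will let me merge the odd and even contributions.

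The main combinatorial step is to rewrite the remaining $s\ge 1$ sum as a sum over even-sized subsets of the \emph{full} index set $\{1,\ldots,u+1\}$, rather than just $\{2,\ldots,u+1\}$. For a fixed $l\ge 1$, I will group the $s=2l-1$ and $s=2l$ contributions together. Each $V\subseteq\{2,\ldots,u+1\}$ with $|V|=2l$ corresponds to a subset $U=V$ of $\{1,\ldots,u+1\}$ of size $2l$ not containing $1$, and each $V$ with $|V|=2l-1$ corresponds, via $U=V\cup\{1\}$, to a size-$2l$ subset of $\{1,\ldots,u+1\}$ containing $1$. Using the parities $(-1)^{2l\cdot i_1}=1$ and $(-1)^{(2l-1)i_1}=(-1)^{i_1}$, the character factors $(-1)^{\sum_{j\in V}(i_1+i_j)}$ in the two cases combine cleanly into $(-1)^{\sum_{k\in U}i_k}$, giving
\[
\sum_{|V|=2l-1}+\sum_{|V|=2l} \;=\; \sum_{1\le j_1<\cdots<j_{2l}\le u+1} (-1)^{\sum_{k=1}^{2l}i_{j_k}}.
\]
Plugging this back and including the $s=0$ contribution yields exactly the formula claimed in Lemma \ref{thm-NNN2}.

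The step I expect to require the most care is the parity-reindexing above: verifying that the two apparently different sums over $V\subseteq\{2,\ldots,u+1\}$ (of size $2l-1$ and $2l$) with their differing sign factors fuse into a single sum over even-sized subsets of $\{1,\ldots,u+1\}$. Once this index manipulation is performed correctly, the rest of the proof is routine substitution from Lemmas \ref{lem_J} and \ref{lem-J*}, with no further analytic input required.
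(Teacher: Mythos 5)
Your proposal is correct and follows essentially the same route as the paper: collapse the $\rho(-1)$ factors using the hypothesis, evaluate $J^*$ via Lemma \ref{lem-J*} and Lemma \ref{lem_J}(c), observe that the odd and even cases $s=2l-1$ and $s=2l$ both contribute the coefficient $-(-1)^u r^{l-1}$, and merge them into a single sum over $2l$-element subsets of $\{1,\ldots,u+1\}$ (the paper's terms $A$, $B$, $C$ correspond exactly to your $s=0$, $s$ even, and $s$ odd contributions). No gaps.
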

\begin{proof}
Since $N=\gcd(\frac{q^m-1}{q-1},ea)=2$ implies $2|m$, then $-1=\gamma^{\frac{q^m-1}{2}}$ is always a square in this paper, from Lemma \ref{thm-NNN} we have
$$\begin{array}{l}
\Omega_{i_1\cdots i_ui_{u+1}}=\frac{r-1}{2^{u+1}}\sum\limits_{0 \le v_2,\cdots,v_{u+1} \le 1}(-1)^{\sum\limits_{j=2}^{u}(i_1+i_j)v_j} J^*(\rho^{v_2},\cdots,\rho^{v_{u+1}}).
\end{array}$$
Note that $J^*(\rho_1,,\cdots,\rho_u)$ does not depend on the order of the characters $\rho_1,\ldots,\rho_u$, so we have
$$\begin{array}{l}
\Omega_{i_1\cdots i_ui_{u+1}}=\frac{r-1}{2^{u+1}}\sum\limits_{I \subset \{2,\ldots,u+1\}}(-1)^{\sum\limits_{j \in I}(i_1+i_j)} J^*(\underbrace{\varepsilon,\ldots,\varepsilon}_{u-\#I},\underbrace{\rho,\cdots,\rho}_{\#I}).
\end{array}$$
Separating the cases that $I =\emptyset$, $\#I>0$ is even and $\#I$ is odd and applying Lemmas \ref{lem_J} and \ref{lem-J*}, we can obtain
$$\begin{array}{l}
\Omega_{i_1\cdots i_ui_{u+1}}=\frac{r-1}{2^{u+1}} \left\{A+B+C\right\},
\end{array}$$
where
\[A=J^*(\underbrace{\varepsilon,\cdots,\varepsilon}_{u})=\frac{1}{r}\bigg((r-1)^u-(-1)^u\bigg),\]
\[B=(-1)^{u+1}\sum\limits_{\substack{\emptyset \ne I \subset \{2,\ldots,u+1\}\\
\#I \mbox{\small \, is even}}}(-1)^{\sum\limits_{j \in I}i_j}r^{(\#I-2)/2}, \]
and
\[C=(-1)^{u+1}\sum\limits_{\substack{I \subset \{2,\ldots,u+1\}\\
\#I \mbox{\small \, is odd}}}(-1)^{i_1+\sum\limits_{j \in I}i_j}r^{(\#I-1)/2}.  \]
Setting $\#I=2l$ if $\#$ is even and $\#I=2l-1$ is $\#I$ is odd completes the proof of Lemma \ref{thm-NNN2}.
\end{proof}

It is easy to see from Lemma \ref{thm-NNN2} that the value $\Omega_{i_1\cdots i_ui_{u+1}}$ does not depend on the order of the sequence $i_1,\ldots,i_u,i_{u+1}$. Now we can prove

\begin{lemma}\label{thm-NNN3}
Suppose that $-1$ is a square in $\gf_r$, then
\[\Omega_{{\scriptsize \underbrace{0 \cdots 0}_u \underbrace{1\cdots 1}_v}}=\frac{r-1}{r 2^{u+v+1}}\bigg\{2(r-1)^{u+v-1}+(-1)^{u+v} \left\{(1+\sqrt{r})^u(1-\sqrt{r})^v+(1-\sqrt{r})^u(1+\sqrt{r})^v\right\}\bigg\}.\]
\end{lemma}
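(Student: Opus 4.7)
\medskip

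\noindent\textbf{Proof proposal for Lemma \ref{thm-NNN3}.}

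The plan is to specialize the general formula of Lemma \ref{thm-NNN2} to the sequence $(i_1,\ldots,i_n)=(\underbrace{0,\ldots,0}_{u},\underbrace{1,\ldots,1}_{v})$, where $n:=u+v$, and then collapse the resulting combinatorial sum by a generating function argument. First, rewriting Lemma \ref{thm-NNN2} in terms of $n=u+v$ and using $-(-1)^{n-1}=(-1)^n$, one has
\[
\Omega_{i_1\cdots i_n}=\frac{r-1}{r\cdot 2^{n}}\left\{(r-1)^{n-1}+(-1)^n\Big(1+\sum_{l\geqslant 1} r^l\,S_l\Big)\right\},
\]
where I have set
\[
S_l:=\sum_{1\leqslant j_1<\cdots<j_{2l}\leqslant n}(-1)^{i_{j_1}+\cdots+i_{j_{2l}}},\qquad S_0:=1.
\]
So the task reduces to evaluating the generating series $F(r):=\sum_{l\geqslant 0} r^l S_l$ when the sequence has $u$ zeros and $v$ ones.

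Next, I would observe that a subset $\{j_1,\ldots,j_{2l}\}\subset\{1,\ldots,n\}$ of size $2l$ contributes $(-1)^b$, where $b$ is the number of selected indices lying in the ``ones'' part of the sequence. Partitioning by $(a,b)$ with $a+b=2l$, $0\leqslant a\leqslant u$, $0\leqslant b\leqslant v$, one gets
\[
S_l=\sum_{a+b=2l}\binom{u}{a}\binom{v}{b}(-1)^b=[x^{2l}]\,(1+x)^u(1-x)^v.
\]
Hence $\sum_{l\geqslant 0} S_l x^{2l}$ is exactly the even part of $(1+x)^u(1-x)^v$, i.e.
\[
\sum_{l\geqslant 0} S_l x^{2l}=\frac{(1+x)^u(1-x)^v+(1-x)^u(1+x)^v}{2}.
\]
Substituting $x=\sqrt{r}$ gives the clean closed form
\[
F(r)=\sum_{l\geqslant 0} S_l r^l=\frac{(1+\sqrt{r})^u(1-\sqrt{r})^v+(1-\sqrt{r})^u(1+\sqrt{r})^v}{2}.
\]

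Finally, I would substitute $1+\sum_{l\geqslant 1} r^l S_l=F(r)$ back into the displayed expression for $\Omega$. This yields
\[
\Omega_{i_1\cdots i_n}=\frac{r-1}{r\cdot 2^{n+1}}\Big\{2(r-1)^{n-1}+(-1)^n\big[(1+\sqrt{r})^u(1-\sqrt{r})^v+(1-\sqrt{r})^u(1+\sqrt{r})^v\big]\Big\},
\]
which is precisely the claim since $n=u+v$.

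The only genuinely delicate step is the bookkeeping between Lemma \ref{thm-NNN2}'s parameter ``$u$'' (indexing $u+1$ entries of the sequence) and the parameters $(u,v)$ in the present statement; apart from keeping the signs $(-1)^{n-1}$ versus $(-1)^n$ straight, the rest is the even-part extraction trick, which is routine. Small sanity checks (e.g.\ $\Omega_{00}=(r-1)/2$, $\Omega_{01}=0$, $\Omega_{0011}=(r-1)^3/16$) confirm the formula and match the tabulated values used in the proof of Theorem \ref{thm-e2}.
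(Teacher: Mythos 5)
Your proposal is correct and follows essentially the same route as the paper: both identify the inner sum over $2l$-subsets as the coefficient of $x^{2l}$ in $(1+x)^u(1-x)^v$ and evaluate the resulting series by extracting the even part of that polynomial at $x=\pm\sqrt{r}$. The bookkeeping between the index conventions of Lemma \ref{thm-NNN2} and the $(u,v)$ of the statement is handled correctly, so nothing further is needed.
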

\begin{proof} From Lemma \ref{thm-NNN2}, it suffices to compute
\[P=\sum_{1 \le l \le \frac{u+v}{2} }r^{l-1}\sum_{1 \le j_1 <j_2<\cdots<j_{2l} \le u+v} (-1)^{\sum\limits_{k=1}^{2l}i_{j_k}}. \]

Since $i_j=0$ for $1 \le j \le u$ and $i_k=1$ for $u+1 \le j \le u+v$, we have \[\sum_{1 \le j_1 <j_2<\cdots<j_{2l} \le u+v} (-1)^{\sum\limits_{k=1}^{2l}i_{j_k}}= \sum_{s=0}^{2l}\binom{u}{2l-s} \binom{v}{s} (-1)^s, \]
and the right hand side is the coefficient of $x^{2l}$ in the expansion of the polynomial $f(x):=(1+x)^u(1-x)^v$. Hence letting
\[f(x)=1+\sum_{n=1}^{u+v}a_nx^n,\quad a_n \in \mathbb{R},\]
then
\[P=\frac{1}{r}\sum_{1 \le l \le \frac{u+v}{2}} a_{2l} (\sqrt{r})^{2l}.\]
Clearly the right hand side is
\[\frac{1}{r} \left\{\frac{f(\sqrt{r})+f(-\sqrt{r})}{2}-1\right\}. \]
This completes the proof of Lemma \ref{thm-NNN3}.
\end{proof}

\section{Conclusions}\label{sec-conclusion}
In this paper, we determine the weight distributions of a new family of cyclic codes with arbitrary number of zeros, more precisely the cyclic codes $\mathcal{C}_{(a_1,\cdots,a_t)}$ given by (\ref{def}) with any $t \ge 2$ zeros under the conditions that $t=e-1$ and $N=2$. Our main results are as follows:
\begin{itemize}
  \item For $N=2$, $t=e-1 \geqslant 2$, $2|a$ and $e|(q^{m/2}-1)$, we obtain the weight distribution of $\mathcal{C}_{(a_1,\cdots,a_t)}$.
  \item For $N=2$ and $t=e-1=3$, we obtain the weight distribution of $\mathcal{C}_{(a_1,\cdots,a_t)}$.
  \item For the general case of $N=2$ and $t=e-1\geqslant 2$, we present a computational formula to determine the weight distribution of $\mathcal{C}_{(a_1,\cdots,a_t)}$.
\end{itemize}


Except for these cases (in \cite{Y-X-D12} and this paper), the weight distribution of the code $\mathcal{C}_{(a_1,\cdots,a_t)}$ is open in most cases when $t < e$. It would be good if some of these open cases can be settled.

\subsection*{Acknowledgments}

Maosheng Xiong's research is supported by the Hong Kong Research Grants Council under Grant Nos. 609513 and 606211. Jing Yang's research is partly supported by the National Natural Science Foundation of China (No. 11371011). Lingli Xia's research is partly supported by Beijing Natural Science Foundation(No. 1144012) and Science and Technology on Information Assurance Laboratory (No. KJ-13-005).

\end{document}